%
%
%

%
%


%
%

\documentclass[
11pt,                          
english                        
]{article}

%
%

\usepackage[english]{babel}    
\usepackage{amsmath}           
\usepackage[utf8]{inputenc}    
\usepackage[T1]{fontenc}       
\usepackage{longtable}         
\usepackage{exscale}           
\usepackage[final]{graphicx}   
\usepackage[sort]{cite}        
\usepackage{array}             
\usepackage{wasysym}           
\usepackage[a4paper]{geometry} 
\usepackage{xspace}            
\usepackage{tikz}              
\usepackage{ifdraft}           
\usepackage{chairx}            
\usepackage[expansion=false    
           ]{microtype}        
\usepackage[nottoc]{tocbibind} 
\usepackage[backref=page,      
           final=true,         
           pdfpagelabels       
           ]{hyperref}         

%
%

\graphicspath{{../tikz/}}

%
%

\usetikzlibrary{matrix}
\usetikzlibrary{arrows}
\usetikzlibrary{patterns}
\usetikzlibrary{decorations.pathreplacing}

%
%

\geometry{bindingoffset=0cm}
\geometry{hcentering=true}
\geometry{hscale=0.8}
\geometry{vscale=0.8}

%
%

\ifdraft{\synctex=1}{}

%
%


%
%


%
%


\newcommand{\qham}[1]{\mathbf{#1}}


\newcommand{\Atensor}{\Anti}

\newcommand{\Stensor}{\Sym}

\newcommand{\degt}{\mathrm{Deg}}

\newcommand{\fparam}{\nu}
\newcommand{\formal}{\llbracket \fparam \rrbracket}
\newcommand{\Inv}{^{\mathrm{inv}}}
\newcommand{\InvS}{^{\mathrm{inv},2}}
\newcommand{\Equi}{_\lie{g}}

\newcommand{\drham}{\mathrm{d}}
\newcommand{\dequi}{\drham\Equi}

\newcommand{\Closed}{Z}
\newcommand{\SymplSec}{\Secinfty_{\mathrm{sympl}}}
\newcommand{\EquiCohom}{\mathrm{H}\Equi}

\newcommand{\FedSym}{\mathcal{W}}
\newcommand{\FedAsym}{\Lambda}
\newcommand{\FedDer}{\mathfrak{D}}
\newcommand{\FedProd}{\mathbin{\circ_{\tiny{\mathrm{F}}}}}
\newcommand{\FedStar}{\star}
\newcommand{\qad}{\frac{1}{\fparam}\ad}

\newcommand{\qadF}{\qad_{\FedProd}}
\newcommand{\adF}{\ad_{\FedProd}}

\newcommand{\RelClass}{c\Equi}
\newcommand{\AbsClass}{c\Equi}
\newcommand{\EqStar}{\mathrm{Def}}

\theoremheaderfont{\normalfont\bfseries}
\theorembodyfont{\itshape}
\theoremsymbol{}
\newtheorem{maintheorem}{Main Theorem}

%
%

\title{Classification of Equivariant Star Products on Symplectic Manifolds}
\author{
  \textbf{Thorsten Reichert}\thanks{\texttt{thorsten.reichert@mathematik.uni-wuerzburg.de}},
  \textbf{Stefan Waldmann}\thanks{\texttt{stefan.waldmann@mathematik.uni-wuerzburg.de}}
  \\[0.5cm]
  \chairXaddress
}

\date{July 2015}

%
%

\begin{document}

%
%

\maketitle

%
%

\begin{abstract}
    In this note we classify invariant star products with quantum
    momentum maps on symplectic manifolds by means of an equivariant
    characteristic class taking values in the equivariant
    cohomology. We establish a bijection between the equivalence
    classes and the formal series in the second equivariant
    cohomology, thereby giving a refined classification which takes
    into account the quantum momentum map as well.
\end{abstract}

%
%

\tableofcontents
\newpage

%
%

\section{Introduction}
\label{sec:Introduction}

The classification of formal star products \cite{bayen.et.al:1978a} up
to equivalence is well-understood, both for the symplectic and the
Poisson case, see e.g. the textbook \cite{waldmann:2007a} for more
details on deformation quantization. While the general classification
in the Poisson case is a by-product of the formality theorem of
Kontsevich \cite{kontsevich:2003a, kontsevich:1997:pre}, the
symplectic case can be obtained easier by various different methods
\cite{nest.tsygan:1995a, bertelson.cahen.gutt:1997a, deligne:1995a,
  gutt.rawnsley:1999a, weinstein.xu:1998a, fedosov:1996a}. The result
is that in the symplectic case there is an intrinsically defined
\emph{characteristic class}
\begin{equation}
    c(\star) \in
    \frac{[\omega]}{\fparam} + \HdR^2(M, \mathbb{C})\formal
\end{equation}
for every star product $\star$ which is a formal series in the second
de Rham cohomology. By convention, one places the symplectic form as
reference point in order $\nu^{-1}$. Then $\star$ and $\star'$ are
equivalent iff $c(\star) = c(\star')$. Moreover, if we denote by
$\EqStar(M, \omega)$ the set of equivalence classes of star products
quantizing $(M, \omega)$, the characteristic class induces a bijection
\begin{equation}
    \label{eq:cBijection}
    c\colon
   \EqStar(M, \omega) \ni [\star]
   \; \mapsto \;
   c(\star) \in
   \frac{[\omega]}{\fparam} + \HdR^2(M, \mathbb{C})\formal.
\end{equation}

If one has in addition a symmetry in form of a group action of a Lie
group by symplectic or Poisson diffeomorphisms, one is interested in
invariant star products. Again, the classification is known both in
the symplectic and in the Poisson case, at least under certain
assumptions on the action. The general Poisson case makes use of the
equivariant formality theorem of Dolgushev \cite{dolgushev:2005a},
which can be obtained whenever there is an invariant connection on the
manifold. Such an invariant connection exists e.g. if the group action
is proper, but also in far more general situations. In the easier
symplectic situation one can make use of Fedosov's construction of a
star product \cite{fedosov:1994a} and obtain the \emph{invariant
  characteristic class} $c\Inv$, now establishing a bijection
\begin{equation}
    \label{eq:InvariantCharClass}
    c\Inv\colon
    \EqStar\Inv(M, \omega) \ni [\star]
    \; \mapsto \;
    c(\star) \in
    \frac{[\omega]}{\fparam}
    +
    \HdR^{\mathrm{inv,2}}(M, \mathbb{C})\Inv\formal,
\end{equation}
such that $\star$ and $\star'$ are invariantly equivalent iff
$c\Inv(\star) = c\Inv(\star')$, see
\cite{bertelson.bieliavsky.gutt:1998a}.  Here one uses the more
refined notion of invariant equivalence of invariant star products,
where the equivalence $S = \id + \sum_{r=1}^\infty \fparam^r S_r$ with
$S(f \star g) = Sf \star' Sg$ for $f, g \in \Cinfty(M)\formal$ is now
required to be \emph{invariant}. Moreover, $\HdR^{\mathrm{inv,2}}(M,
\mathbb{C})$ denotes the invariant second de Rham cohomology,
i.e. invariant closed two-forms modulo differentials of invariant
one-forms. Again, one needs an invariant connection for this to work.

In symplectic and Poisson geometry, the presence of a symmetry group
is typically not enough: one wants the fundamental vector fields of
the action to be Hamiltonian by means of an $\Ad^*$-equivariant
momentum map $J\colon M \longrightarrow \lie{g}^*$, where $\lie{g}$ is
the Lie algebra of the group $G$ acting on $M$. The notion of a
momentum map has been transferred to deformation quantization in
various flavours, see e.g. the early work
\cite{arnal.cortet.molin.pinczon:1983a}. The resulting general notion
of a quantum momentum map is due to \cite{xu:1998a} but was already
used in examples in
e.g. \cite{bordemann.brischle.emmrich.waldmann:1996a}. We will follow
essentially the conventions from \cite{mueller-bahns.neumaier:2004b,
  mueller-bahns.neumaier:2004a}, see also \cite{gutt.rawnsley:2003a,
  hamachi:2002a}: a quantum momentum map is a formal series $\qham{J}
\in C^1\left(\lie{g}, \Cinfty(M)\right)\formal$ such that for all $\xi
\in \lie{g}$ the function $\qham{J}(\xi)$ generates the fundamental
vector field by $\star$-commutators and such that one has the
equivariance condition that $[\qham{J}(\xi), \qham{J}(\eta)]_\star =
\fparam \qham{J}([\xi, \eta])$ for $\xi, \eta \in \lie{g}$. Here the
zeroth order is necessarily an equivariant momentum map in the
classical sense. We assume the zeroth order to be fixed once and for
all.

The classification we are interested in is now for the pair of a
$G$-invariant star product $\star$ and a corresponding quantum
momentum map $\qham{J}$ with respect to the equivalence relation
determined by \emph{equivariant equivalence}: we say $(\star,
\qham{J})$ and $(\star', \qham{J}')$ are equivariantly equivalent if
there is a $G$-invariant equivalence transformation $S$ relating
$\star$ to $\star'$ as before and $S\qham{J} = \qham{J}'$.  In most
interesting cases, the group $G$ is connected and hence invariance is
equivalent to infinitesimal invariance under the Lie algebra action by
fundamental vector fields. Thus it is reasonable to assume a Lie
algebra action from the beginning, whether it actually comes from a
corresponding Lie group action or not.  The set of equivalence classes
for this refined notion of equivalence will then be denoted by
$\EqStar\Equi(M,\omega)$. The main result of this work is then the
following:
\begin{maintheorem}[Equivariant equivalence classes]
    \label{theorem:MainTheorem}%
    Let $(M, \omega)$ be a connected symplectic manifold with a
    strongly Hamiltonian action $\lie{g} \ni \xi \mapsto X_\xi \in
    \Secinfty(TM)$ by a real finite-dimensional Lie algebra
    $\lie{g}$. Suppose there exists a $\lie{g}$-invariant
    connection. Then there exists a characteristic class
    \begin{equation}
        \label{eq:TheClassIsCool}
        \AbsClass\colon
        \EqStar\Equi(M,\omega)
        \longrightarrow
        \frac{[\omega - J_0]}{\nu} + \EquiCohom^2(M)\formal
    \end{equation}
    establishing a bijection to the formal series in the second
    equivariant cohomology. Under the canonical map $\EquiCohom^2(M)
    \longrightarrow \HdR^{\mathrm{inv, 2}}(M)$ the class becomes the
    invariant characteristic class.
\end{maintheorem}
The main idea is to base the construction of this class on the Fedosov
construction. This is where we need the invariant connection in order
to obtain invariant star products and quantum momentum maps. The
crucial and new aspect compared to the existence and uniqueness
statements obtained earlier is that we have to find an invariant
equivalence transformation $S$ for which we can explicitly compute
$S\qham{J}$ in order to compare it to $\qham{J}'$.

Of course, the above theorem only deals with the symplectic situation
which is substantially easier than the genuine Poisson case. Here one
can expect similar theorems to hold, however, at the moment they seem
out of reach. The difficulty is, in some sense, to compute the effect
of invariant equivalence transformations on quantum momentum maps by
means of some chosen equivariant formality, say the one of
Dolgushev. On a more conceptual side, this can be seen as part of a
much more profound equivariant formality conjecture stated by Nest and
Tsygan \cite{tsygan:2010a, nest:2013a}. From that point of view, our
result supports their conjecture.

One of our motivations to search for such a characteristic class comes
from the classification result of $G$-invariant star products up to
equivariant Morita equivalence
\cite{jansen.neumaier.schaumann.waldmann:2012a}, where a reminiscent
of the equivariant class showed up in the condition for equivariant
Morita equivalence.

The paper is organized as follows: in Section~\ref{sec:preliminaries}
we collect some preliminaries on invariant star products and the
existence of quantum momentum maps. Section~\ref{sec:Fedosov} contains
a brief reminder on those parts of Fedosov's construction which we
will need in the sequel. It contains also the key lemma to prove our
main theorem. In Section~\ref{sec:classification} we establish a
relative class which allows to determine whether two given pairs of
star products and corresponding quantum momentum maps are
equivariantly equivalent. In the last Section~\ref{sec:char} we define
the characteristic class and complete the proof of the main theorem.


\section{Preliminaries}
\label{sec:preliminaries}

Throughout this paper let $(M,\omega)$ denote a connected, symplectic
manifold, $\left\{\argument, \argument\right\}$ the corresponding
Poisson bracket, $\Formen^\bullet(M)$ the differential forms on $M$,
$\Closed^\bullet(M)$ the closed forms, $\lie{g}$ a real
finite-dimensional Lie algebra, and $\nabla$ a torsion-free,
symplectic connection on $M$. Then every anti-homomorphism $\lie{g}
\longrightarrow \SymplSec(TM)\colon \xi \longmapsto X_\xi$ from
$\lie{g}$ into the symplectic vector fields on $M$ gives rise to a
representation of $\lie{g}$ on $\Cinfty(M)$ via $\xi \mapsto (f
\mapsto - \Lie_{X_\xi} f)$ where $\Lie$ denotes the Lie
derivation. For convenience, we will abbreviate $\Lie_{X_\xi}$ to
$\Lie_\xi$ and analogously for the insertion $\ins_\xi$. In most
cases, the Lie algebra action arises as the infinitesimal action of a
Lie group action by a Lie group $G$ acting symplectically on $M$. In
the case of a connected Lie group, we can reconstruct the action of
$G$ as usual. However, we do not assume to have a Lie group action
since in several cases of interest the vector fields $X_\xi$ might not
have complete flows.

Since the main focus here will be on the interaction between
symmetries conveyed by $\lie{g}$ and formal star products on $M$, we
shall briefly recall the relevant basic definitions, following
notations and conventions from \cite{neumaier:2001a}, see also
\cite[Sect.~6.4]{waldmann:2007a} for a more detailed introduction.
Let $\Cinfty(M)\formal$ be the space of formal power series in the
formal parameter $\fparam$ with coefficients in $\Cinfty(M)$. A star
product on $(M,\omega)$ is a bilinear map
\begin{equation}
    \label{eq:StarProduct}
    \star\colon
    \Cinfty(M) \times \Cinfty(M)
    \longrightarrow
    \Cinfty(M)\formal\colon
    (f,g) \longmapsto f\star g =
    \sum\limits_{k=0}^\infty \fparam^k C_k(f,g),
\end{equation}
such that its $\fparam$-bilinear extension to $\Cinfty(M)\formal
\times \Cinfty(M)\formal$ is an associative product, $C_0(f,g) = fg$
and $C_1(f,g) - C_1(g,f) = \left\{f,g\right\}$ holds for all
$f,g\in\Cinfty(M)$, and $C_k\colon \Cinfty(M) \times \Cinfty(M)
\longrightarrow \Cinfty(M)$ is a bidifferential operator vanishing on
constants for all $k\geq 1$.  Such a star product is called
$\lie{g}$-invariant if the $\fparam$-linear extension of $\Lie_\xi$ to
$\Cinfty(M)\formal$ is a derivation of $\star$ for all
$\xi\in\lie{g}$.

Recall further that a linear map $J_0\colon \lie{g} \longrightarrow
\Cinfty(M)$ is called a (classical) Hamiltonian for the action if it
satisfies $\Lie_\xi f = - \left\{ J_0(\xi), f \right\}$ for all $\xi
\in \lie{g}$ and $f \in \Cinfty(M)$. It is called a (classical)
momentum map, if in addition $J_0([\xi,\eta]) = \left\{ J_0(\xi),
    J_0(\eta) \right\}$ holds for all $\xi,\eta \in \lie{g}$. We can
adapt a similar notion for star products on $M$ by replacing the
Poisson bracket with the $\star$-commutator, compare also
\cite{xu:1998a}. We will generally adopt the notation $\ad_\star\left(
    f \right) g = \left[ f, g \right]_\star$ with $\left[\argument,
    \argument\right]_\star$ being the commutator with respect to the
product $\star$. Finally, we will, for any vector space $V$, denote by
$C^k(\lie{g}, V)$ the space of $V$-valued, $k$-multilinear,
alternating forms on $\lie{g}$. The following definition is by now the
standard notion \cite{xu:1998a,mueller-bahns.neumaier:2004a}:
\begin{definition}[Quantum momentum map]
    \label{def:prelim_qham}
    Let $\star$ be a $\lie{g}$-invariant star product. A map $\qham{J}
    \in C^1\left( \lie{g}, \Cinfty(M) \right)\formal$ is called a
    quantum momentum map if
    \begin{equation}
        \label{eq:ConditionQMM}
        \Lie_\xi = -\qad_\star\left( \qham{J}(\xi) \right) \qquad
        \text{and} \qquad \qham{J}\left( [\xi, \eta] \right) =
        \frac{1}{\fparam} \left[ \qham{J}(\xi), \qham{J}(\eta)
        \right]_\star
    \end{equation}
    hold for all $\xi,\eta \in \lie{g}$. If only the first equality is
    satisfied, we will call $\qham{J}$ a quantum Hamiltonian.
\end{definition}
Evaluating the above equations in zeroth order in $\fparam$ for any
quantum Hamiltonian (quantum momentum map) $\qham{J}$, one can readily
observe that $J_0 = \qham{J}\at{\fparam = 0}$ is a classical
Hamiltonian (momentum map). Conversely, we will say that $\qham{J}$
deforms the Hamiltonian (momentum map) $J_0$. Having the previous
definitions at hand, one can define various flavours of equivalences
between star products.
\begin{definition}[$\lie{g}$-Equivariant equivalence]
    Let $\star$ and $\star'$ be star products on $M$.
    \begin{definitionlist}
    \item \label{item:equivalence} They are called equivalent if there
        is a formal series $T = \id + \sum_{k=1}^\infty \fparam^k T_k$
        of differential operators $T_k: \Cinfty(M) \longrightarrow
        \Cinfty(M)$ such that
        \begin{equation}
            \label{eq:Equivalence}
            f\star g = T^{-1}\left( T(f) \star' T(g) \right)
            \qquad
            \textrm{and}
            \qquad
            T(1) = 1
        \end{equation}
        for all $f,g\in\Cinfty(M)\formal$. In this case $T$ is called an
        equivalence from $\star$ to $\star'$.
    \item \label{item:InvariantEquivalence} If $\star$ and $\star'$
        are $\lie{g}$-invariant star products we will call an
        equivalence transformation $T$ from $\star$ to $\star'$ a
        $\lie{g}$-invariant equivalence if $\Lie_\xi T = T \Lie_\xi$
        holds for all $\xi\in\lie{g}$.
    \item \label{item:EquivariantEquivalence} If in addition
        $\qham{J}$ and $\qham{J'}$ are quantum momentum maps of
        $\star$ and $\star'$ respectively, we will call the pairs
        $(\star, \qham{J})$ and $(\star',\qham{J'})$ equivariantly
        equivalent if there is a $\lie{g}$-invariant equivalence $T$
        from $\star$ to $\star'$ such that $T\qham{J} = \qham{J'}$.
    \end{definitionlist}
\end{definition}
The first two versions of equivalence \cite{nest.tsygan:1995a,
  fedosov:1996a, gutt.rawnsley:1999a, bertelson.cahen.gutt:1997a,
  deligne:1995a, weinstein.xu:1998a} and invariant equivalence
\cite{bertelson.bieliavsky.gutt:1998a} were discussed in the
literature already extensively, leading to the well-known
classification results. In this work we will deal with the third
version.

For the concluding classification result, we will need the equivariant
cohomology (in the Cartan model) on $M$ with respect to $\lie{g}$, for
more details, see e.g. the monograph
\cite{guillemin.sternberg:1999a}. Since we are mainly interested in
the Lie algebra case, the underlying complex is the complex of
equivariant differential forms on $M$, that is
\begin{equation}
    \label{eq:def_cohomology}
    \Omega\Equi^k(M)
    =
    \bigoplus\limits_{2i+j=k}
    \left(\Sym^i(\lie{g}^*) \tensor \Omega^j(M)\right)\Inv,
\end{equation}
where $\vphantom{\Omega}\Inv$ denotes the space of
$\lie{g}$-invariants with respect to the coadjoint representation on
$\Sym^\bullet(\lie{g}^*)$ and $-\Lie_\xi$ on
$\Omega^\bullet(M)$. Equivalently, one can view the elements $\alpha
\in \Omega\Equi^k(M)$ as $\Omega^k(M)$-valued polynomials $\lie{g}
\longrightarrow \Omega^\bullet(M)$ subject to the equivariance
condition
\begin{equation}
    \label{eq:prelim_equivariance_condition}
    \alpha\left( [\xi,\eta] \right)
    =
    -\Lie_\xi \alpha(\eta)
    \qquad
    \textrm{for all } \xi,\eta\in\lie{g}.
\end{equation}
The differential $\dequi\colon \Omega\Equi^k(M) \longrightarrow
\Omega\Equi^{k+1}(M)$ is defined to be
\begin{equation*}
    \left(\dequi \alpha \right)(\xi)
    =
    \drham\left( \alpha(\xi)\right) + \ins_\xi \alpha(\xi)
    \qquad
    \textrm{or}
    \qquad
    \dequi \alpha = \drham \alpha + \ins_\bullet \alpha,
\end{equation*}
where $\drham$ denotes the de Rham differential on $M$ and $\ins_\xi$
the insertion of $X_\xi$ into the first argument. The equivariant
cohomology on $M$ with respect to $\lie{g}$ is then, as usual, defined
by $\EquiCohom(M) = \ker \dequi / \image \dequi$ and we will denote
the equivariant class of the representative $\alpha \in
\Omega\Equi^\bullet(M)$ by $\left[\alpha \right]\Equi$.

\section{Fedosov Construction}
\label{sec:Fedosov}

Since our central classification result will make heavy use of the
Fedosov construction \cite{fedosov:1994a}, we shall collect and recall
the relevant results here briefly. As in the previous section, we will
follow the exposition in \cite{neumaier:2001a}, see also
\cite[Sect.~6.4]{waldmann:2007a} for further details. Let us start by
defining the formal Weyl algebra
\begin{equation}
    \label{eq:FormalWeylAlgebra}
    \FedSym \tensor \FedAsym(M)
    =
    \prod\limits_{k=0}^\infty \left(
        \mathbb{C}
        \tensor
        \Secinfty\left(
            \Stensor^k T^*M \tensor \Atensor T^*M
        \right)
    \right)\formal,
\end{equation}
where $(M,\omega)$ is a symplectic manifold. Then $\FedSym \tensor
\FedAsym$ (we will frequently drop the reference to $M$) is obviously
an associative graded commutative algebra with respect to the
pointwise symmetrized tensor product in the first factor and the
$\wedge$-product in the second factor. The resulting product we will
denote by $\mu$. We can additionally observe that $\FedSym \tensor
\FedAsym$ is graded in various ways and define the corresponding
degree maps on elements of the form $a = (X \tensor \alpha) \fparam^k$
with $X\in\Stensor^\ell T^*M$ and $\alpha\in\Atensor^m T^*M$ as
\begin{equation}
    \label{eq:TheDegrees}
    \degs a = \ell a, \qquad \dega a = ma,
    \quad
    \textrm{and}
    \quad
    \deg_\fparam a = ka,
\end{equation}
extend them as derivations to $\FedSym \tensor \FedAsym$, and refer to
the first two as symmetric and antisymmetric degree,
respectively. Finally, the so called total degree $\degt = \degs +
2\deg_\fparam$ will be needed later on.

We can then proceed and define another associative product on $\FedSym
\tensor \FedAsym$ first locally in a chart $(U, x)$ by
\begin{equation}
    \label{eq:fedosov_prod}
    a \FedProd b
    =
    \mu \circ \exp\left\{
        \frac{\fparam}{2} \omega^{ij}
        \inss(\partial_i) \tensor \inss(\partial_j)
    \right\}
    (a \tensor b),
\end{equation}
with $a, b \in \FedSym \tensor \FedAsym$, where $\omega\at{U} =
\frac{1}{2} \omega_{ij} \D x^i \wedge \D x^j$ and $\omega^{ik}
\omega_{jk} = \delta^i_j$, and with $\inss(\partial_i)$ being the
insertion of $\partial_i$ into the first argument on $\FedSym$.  The
tensorial character of the insertions then shows that this is actually
globally well-defined and yields an associative product since the
insertions in the symmetric tensors are commuting derivations.

It is easy to see that $\FedProd$ is neither $\degs$- nor
$\deg_\fparam$-graded but that it is $\dega$- and $\degt$-graded. We
can additionally use the latter to obtain a filtration of
$\FedSym\tensor\FedAsym$. To that end, let $\FedSym_k\tensor\FedAsym$
denote those elements of $\FedSym\tensor\FedAsym$ whose total degree
is greater than or equal to $k$. We then have
\begin{equation}
    \label{eq:TheFiltration}
    \FedSym\tensor\FedAsym
    =
    \FedSym_0\tensor\FedAsym
    \supseteq
    \FedSym_1\tensor\FedAsym
    \supseteq \cdots \supseteq
    \left\{0\right\}
    \quad
    \textrm{and}
    \quad
    \bigcap\limits_{k=0}^\infty \FedSym_k\tensor\FedAsym
    =
    \left\{0\right\}.
\end{equation}
We will frequently use this filtration together with Banach's fixed
point theorem to find unique solutions to equations of the form $a =
L(a)$ for $a\in\FedSym\tensor\FedAsym$ and $L: \FedSym\tensor\FedAsym
\longrightarrow \FedSym\tensor\FedAsym$ such that $L$ is contracting
with respect to the total degree. For details see
e.g. \cite[Sect.~6.2.1]{waldmann:2007a}.

Essential for the Fedosov construction are then the following
operators on $\FedSym \tensor \FedAsym$
\begin{equation}
    \label{eq:deltaDef}
    \begin{split}
        \delta
        =
        \left( 1 \tensor \drham x^i \right) \inss(\partial_i)
        \qquad
        \delta^*
        =
        \left( \drham x^i \tensor 1 \right) \insa(\partial_i)
        \qquad
        \nabla
        =
        \left(1 \tensor \drham x^i \right) \nabla_{\partial_i},
    \end{split}
\end{equation}
where $\nabla$ is any torsion-free, symplectic connection on $M$.
Again, it is clear that these definitions yield chart-independent
operators. From the definition of $\delta$ and $\delta^*$ one can
easily calculate that $\delta$ is a graded derivation of $\FedProd$
and $\delta^2 = \left(\delta^*\right)^2 = 0$. With the help of the
projection $\sigma\colon \FedSym \tensor \FedAsym \longrightarrow
\Cinfty(M)\formal$ onto symmetric and antisymmetric degree $0$ as well
as a normalized version of $\delta^*$ which is defined on homogeneous
elements $a \in \FedSym^k \tensor \FedAsym^\ell$ as
\begin{equation}
    \label{eq:deltaInv}
    \delta^{-1} a
    =
    \begin{cases}
        \frac{1}{k+\ell}\,\delta^* a & \textrm{for } k+\ell \neq 0 \\
        0 & \textrm{for } k+\ell = 0,
    \end{cases}
\end{equation}
one finds that
\begin{equation}
    \label{eq:fedosov_delta_homotopy}
    \id_{\FedSym \tensor \FedAsym} - \sigma
    =
    \delta \delta^{-1} + \delta^{-1}\delta.
\end{equation}
For $\nabla$ on the other hand, one can check that $\nabla$ is a
graded derivation of $\FedProd$ and that
\begin{equation}
    \label{eq:CurvatureShowsUp}
    \nabla^2 = -\qadF(R)
\end{equation}
with $R \in \FedSym^2 \tensor \FedAsym^2$ being the curvature tensor
of $\nabla$. The ingenious new element of Fedosov is then a graded
derivation of $\FedProd$, which is subject of the following theorem.
\begin{theorem}[Fedosov]
    \label{thm:fedosov_der}%
    Let $\Omega \in \fparam\Closed^2(M)\formal$ be a series of closed
    two forms on $M$. Then there exists a unique $r\in\FedSym_2
    \tensor \FedAsym^1$ such that
    \begin{equation}
        \label{eq:Fedosovr}
        r
        =
        \delta^{-1} \left(
            \nabla r - \frac{1}{\fparam} r \FedProd r
            + R + 1 \tensor \Omega
        \right).
    \end{equation}
    The Fedosov derivation
    \begin{equation}
        \label{eq:fedosov_der_def}
        \FedDer = -\delta + \nabla - \qadF\left( r \right)
    \end{equation}
    is then a graded $\FedProd$-derivation of antisymmetric degree $1$
    with $\FedDer^2 = 0$.
\end{theorem}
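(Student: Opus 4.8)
The plan is to establish the three assertions in turn: unique solvability of \eqref{eq:Fedosovr}, the graded derivation property of $\FedDer$, and finally $\FedDer^2=0$, which carries the actual content. For existence and uniqueness I would apply Banach's fixed point theorem to the map $L(a)=\delta^{-1}\bigl(\nabla a-\tfrac{1}{\fparam}a\FedProd a+R+1\tensor\Omega\bigr)$ on $\FedSym_2\tensor\FedAsym^1$, exactly as announced after \eqref{eq:TheFiltration}. The point is that $\delta^{-1}$ raises the total degree by one, $\nabla$ preserves it, and $\FedProd$ preserves it as well, so that $\tfrac{1}{\fparam}a\FedProd a$ raises the total degree by two whenever both factors lie in $\FedSym_2$; together with the inhomogeneity $\delta^{-1}(R+1\tensor\Omega)$, which has total degree at least three, this shows that $L$ maps $\FedSym_2\tensor\FedAsym^1$ into itself and that $L(a)-L(b)$ has strictly larger total degree than $a-b$. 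Hence $L$ is contracting and has a unique fixed point $r$. I record for later use that this $r$ automatically satisfies $\delta^{-1}r=0$, because $(\delta^{-1})^2=0$, and $\sigma r=0$, because $r$ has antisymmetric degree one.

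The derivation property is immediate: $-\delta$ and $\nabla$ are graded $\FedProd$-derivations of antisymmetric degree one by the discussion preceding the theorem, and $\qadF(r)=\tfrac{1}{\fparam}\adF(r)$ is the corresponding inner derivation, which is graded of antisymmetric degree one since $r$ has antisymmetric degree one. Their sum $\FedDer$ is therefore a graded $\FedProd$-derivation of antisymmetric degree one.

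The heart of the matter is $\FedDer^2=0$. Expanding the square and using $\delta^2=0$, the anticommutation $\delta\nabla+\nabla\delta=0$ (valid since $\nabla$ is torsion-free), the curvature relation \eqref{eq:CurvatureShowsUp}, the super-Leibniz identity relating a graded derivation to inner derivations, and $\qadF(r)^2=\tfrac{1}{\fparam}\qadF(r\FedProd r)$, I would bring $\FedDer^2$ into the form $\FedDer^2=-\qadF(\hat\Omega)$ with the Weyl curvature $\hat\Omega=R-\delta r+\nabla r-\tfrac{1}{\fparam}r\FedProd r$. It then suffices to show $\hat\Omega=-1\tensor\Omega$, since an element of symmetric degree zero such as $1\tensor\Omega$ is $\FedProd$-central and hence annihilated by $\qadF$. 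To identify $\hat\Omega$ I would set $B=\hat\Omega+1\tensor\Omega$ and verify that $\delta^{-1}B=0$ and $\sigma B=0$ come for free from \eqref{eq:fedosov_delta_homotopy} together with $\delta^{-1}r=0$ and $\sigma r=0$, while $\FedDer B=0$ follows from the Bianchi identity $\FedDer\hat\Omega=0$ and from $\FedDer(1\tensor\Omega)=0$, the latter using $\delta(1\tensor\Omega)=0$, centrality of $1\tensor\Omega$, and $\drham\Omega=0$. Feeding $\FedDer B=0$ back through \eqref{eq:fedosov_delta_homotopy} rewrites it as the fixed point equation $B=\delta^{-1}(\nabla-\qadF(r))B$ of a total-degree contraction, whose only solution is $B=0$.

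I expect the Bianchi identity $\FedDer\hat\Omega=0$, equivalently $\delta\hat\Omega=0$, to be the one genuinely technical obstacle: it is the only step not formally forced by the filtration, and establishing it requires combining the first and second Bianchi identities for $R$ (namely $\delta R=0$ and $\nabla R=0$) with the graded Leibniz rule for $\delta$ along $\FedProd$ and with $\nabla^2=-\qadF(R)$ in a somewhat lengthy but standard computation.
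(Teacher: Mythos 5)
The paper itself gives no proof of this theorem---it is Fedosov's classical result, recalled from the cited references \cite{fedosov:1994a, fedosov:1996a, neumaier:2001a, waldmann:2007a}---so the comparison is with the standard argument found there, and your proposal reconstructs exactly that argument correctly: the total-degree fixed-point construction of $r$ (with the correct observation that $\delta^{-1}r=0$ and $\sigma r = 0$ come for free), the computation $\FedDer^2 = -\qadF\bigl(R - \delta r + \nabla r - \tfrac{1}{\fparam}\, r \FedProd r\bigr)$, and the second contraction argument via the homotopy formula \eqref{eq:fedosov_delta_homotopy} and $\FedDer B = 0$ forcing $B = \hat\Omega + 1 \tensor \Omega = 0$. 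The only imprecision is your aside that the Bianchi identity $\FedDer\hat\Omega = 0$ is ``equivalently $\delta\hat\Omega = 0$'': these are not equivalent statements---$\FedDer\hat\Omega = 0$ holds identically for \emph{any} $r$ of antisymmetric degree one, by precisely the computation you sketch from $\delta R = 0$, $\nabla R = 0$, $\delta\nabla + \nabla\delta = 0$, $\nabla^2 = -\qadF(R)$ and the graded Leibniz and Jacobi identities, whereas $\delta\hat\Omega = 0$ is particular to the fixed point and is in effect the conclusion being proved---but since your argument only invokes $\FedDer\hat\Omega = 0$, this is harmless.
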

One finds that on elements $a \in \FedSym \tensor \FedAsym^{\bullet
  \geq 1}$ with \emph{positive} antisymmetric degree there is a
homotopy operator corresponding to $\FedDer$, which is given by
\begin{equation}
    \label{eq:FedosovHomotopy}
    \FedDer^{-1} a
    =
    - \delta^{-1}
    \frac{1}
    {\id - \left[\delta^{-1}, \nabla - \qadF(r)\right]}
    \quad
    \textrm{such that}
    \quad
    \FedDer\FedDer^{-1} a + \FedDer^{-1}\FedDer a = a
\end{equation}
where $\FedDer$ is constructed from $r$ according to the previous
theorem. Using $\FedDer^{-1}$ one can show that there is a unique
isomorphism of $\mathbb{C}\formal$-vector spaces $\tau:
\Cinfty(M)\formal \longrightarrow \ker \FedDer \cap
\FedSym\tensor\FedAsym$, called the Fedosov-Taylor series, with
inverse being the projection $\sigma$ restricted to the codomain of
$\tau$. The Fedosov-Taylor series $\tau$ can explicitly be written as
$\tau(f) = f - \FedDer^{-1}\left( 1\tensor \drham f\right)$. We can
now proceed to define the Fedosov star product $\FedStar_\Omega$ on
$\Cinfty(M)\formal$ as the pullback of $\FedProd$ with $\tau$,
\begin{equation}
    \label{eq:FedosovStarProduct}
    f\FedStar_\Omega g
    =
    \sigma\left( \tau(f) \FedProd \tau(g) \right)
\end{equation}
for all $f,g\in\Cinfty(M)\formal$, where we explicitly referenced the
formal series of two-forms $\Omega$ from which $r$ in the Fedosov
derivation $\FedDer$ has been constructed. Of course, in this very
brief discourse we omitted numerous details, which are however fully
displayed in \cite{fedosov:1996a}, see also \cite{neumaier:2002a} and
\cite[Sect.~6.4]{waldmann:2007a}. As a final remark let us briefly
note that the construction obviously depends on the choice of the
torsion-free, symplectic connection $\nabla$. However, as mentioned in
the previous section, we fix one such connection and will mostly omit
any explicit mention.

Instead, let us focus on another aspect of the Fedosov construction,
namely on it's connection with symmetries in the form of
representations of a Lie algebra $\lie{g}$ on $\Cinfty(M)\formal$. The
first result we would like to cite from
\cite{mueller-bahns.neumaier:2004a} clarifies under which
circumstances the above construction yields a $\lie{g}$-invariant star
product.
\begin{proposition}[Müller-Bahns, Neumaier]
    \label{proposition:fedosov_inv_star}%
    The Fedosov star product obtained from a torsion-free, symplectic
    connection $\nabla$ and $\Omega \in \fparam\Closed^2(M)\formal$ is
    $\lie{g}$-invariant if and only if
    \begin{equation}
        \label{eq:InvarianceOfNablaOmega}
        \left[\nabla, \Lie_\xi \right] = 0
        \qquad
        \textrm{and}
        \qquad
        \Lie_\xi \Omega = 0
    \end{equation}
    for all $\xi\in\lie{g}$.
\end{proposition}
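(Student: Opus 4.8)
The plan is to encode $\lie{g}$-invariance of $\FedStar_\Omega$ as the commutation of $\Lie_\xi$ with the Fedosov derivation $\FedDer$, and to read off the two conditions from the fixed-point equation~\eqref{eq:Fedosovr} defining $r$. Denote also by $\Lie_\xi$ its $\fparam$-linear extension to $\FedSym\tensor\FedAsym$. Two observations are used throughout. First, as each $X_\xi$ is symplectic we have $\Lie_\xi\omega=0$, so $\Lie_\xi$ kills the components $\omega^{ij}$ and hence commutes with the bidifferential operator in~\eqref{eq:fedosov_prod}; therefore $\Lie_\xi$ is a derivation of $\FedProd$. Second, $\delta$, $\delta^{-1}$ and $\sigma$ are assembled from natural, connection-independent fibre operations, so $\Lie_\xi$ commutes with each of them and preserves the symmetric, antisymmetric and $\fparam$-degrees, in particular the total degree $\degt$.

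Since the connection enters $\FedDer=-\delta+\nabla-\qadF(r)$ only through $\nabla$ and $r$, using that $\Lie_\xi$ is a $\FedProd$-derivation one computes
\[
    [\FedDer,\Lie_\xi]
    =
    [\nabla,\Lie_\xi]+\qadF\!\left(\Lie_\xi r\right),
\]
the $\delta$-term dropping out by the previous paragraph. The two summands sit in complementary total degrees: $[\nabla,\Lie_\xi]$ is tensorial, preserves $\degt$ and annihilates $\Cinfty(M)\formal$, whereas $\qadF(\Lie_\xi r)$ strictly raises $\degt$, since $\Lie_\xi r$ inherits from $r$ the lowest total degree three while $\qadF=\frac{1}{\fparam}\adF$ lowers $\degt$ by two. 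This dichotomy is what will eventually disentangle the two conditions.

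For the direction ``$\Leftarrow$'' assume~\eqref{eq:InvarianceOfNablaOmega}. From $[\nabla,\Lie_\xi]=0$ and~\eqref{eq:CurvatureShowsUp} we get $\qadF(\Lie_\xi R)=0$, and as $\Lie_\xi R$ is $\FedProd$-central of positive symmetric degree it vanishes. Applying $\Lie_\xi$ to~\eqref{eq:Fedosovr}, pulling it through $\delta^{-1}$, and inserting $\Lie_\xi R=0$ and $\Lie_\xi\Omega=0$, shows that $s=\Lie_\xi r$ solves
\[
    s
    =
    \delta^{-1}\!\left(
        \nabla s-\tfrac{1}{\fparam}\bigl(s\FedProd r+r\FedProd s\bigr)
    \right),
\]
whose right-hand side is $\degt$-contracting on $\FedSym_2\tensor\FedAsym^1$; Banach's fixed point theorem forces $s=\Lie_\xi r=0$. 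With $[\nabla,\Lie_\xi]=0$ the commutator formula gives $[\FedDer,\Lie_\xi]=0$, so $\Lie_\xi$ preserves $\ker\FedDer$; since it commutes with $\sigma$, uniqueness of the Fedosov--Taylor series yields $\Lie_\xi\tau(f)=\tau(\Lie_\xi f)$. Being a $\FedProd$-derivation that commutes with both $\sigma$ and $\tau$, $\Lie_\xi$ is then a derivation of $\FedStar_\Omega=\sigma(\tau(\argument)\FedProd\tau(\argument))$, i.e.\ $\FedStar_\Omega$ is $\lie{g}$-invariant.

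For ``$\Rightarrow$'' assume $\FedStar_\Omega$ is $\lie{g}$-invariant. As a $\FedProd$-product of $\FedDer$-closed elements is again $\FedDer$-closed, $\tau$ is an algebra isomorphism onto $\ker\FedDer\cap(\FedSym\tensor\FedAsym^0)$ with $\tau(f)\FedProd\tau(g)=\tau(f\FedStar_\Omega g)$, and invariance becomes the bilinear identity $\sigma\bigl(E_\xi(f)\FedProd\tau(g)+\tau(f)\FedProd E_\xi(g)\bigr)=0$ for all $f,g$, where the Leibniz defect $E_\xi(f)=\Lie_\xi\tau(f)-\tau(\Lie_\xi f)=\FedDer^{-1}[\FedDer,\Lie_\xi]\tau(f)$ satisfies $\sigma\circ E_\xi=0$. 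Granting that this weak identity upgrades to $E_\xi\equiv0$, i.e.\ $\Lie_\xi\circ\tau=\tau\circ\Lie_\xi$, we obtain $[\FedDer,\Lie_\xi]\tau(f)=0$ for all $f$; its lowest total-degree component is $[\nabla,\Lie_\xi]$ applied to the symmetric-degree-one part of $\tau(f)$, and as $f$ varies this forces $[\nabla,\Lie_\xi]=0$ on the degree-one generators, hence everywhere. The commutator formula then collapses to $\qadF(\Lie_\xi r)\tau(f)=0$, so $\Lie_\xi r$ commutes under $\FedProd$ with every $\tau(f)$, which at leading order forces $\Lie_\xi r=0$ since it has positive symmetric degree; substituting $\Lie_\xi r=0$ and $\Lie_\xi R=0$ into $\Lie_\xi$ of~\eqref{eq:Fedosovr} leaves $0=\delta^{-1}(1\tensor\Lie_\xi\Omega)$, and injectivity of $\delta^{-1}$ on $\FedSym^0\tensor\FedAsym^2$ gives $\Lie_\xi\Omega=0$. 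I expect the one genuine obstacle to be precisely this upgrade $E_\xi\equiv0$: passing from the weak bilinear invariance identity to the vanishing of the defect, which must exploit the rigidity of $E_\xi=\FedDer^{-1}[\FedDer,\Lie_\xi]\tau$ through a degree-by-degree (Hochschild-cohomological) induction; once it is in hand, both the ``$\Leftarrow$'' direction and the extraction of the two conditions are mechanical.
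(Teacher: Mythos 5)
Your ``$\Leftarrow$'' direction is correct and is essentially the standard argument (the one in \cite{mueller-bahns.neumaier:2004a}, which this paper cites instead of reproving): invariance of $\nabla$ and $\Omega$ kills $\Lie_\xi R$, the fixed-point equation \eqref{eq:Fedosovr} then forces $\Lie_\xi r = 0$ by uniqueness, hence $[\FedDer, \Lie_\xi] = 0$, hence $\Lie_\xi \tau = \tau \Lie_\xi$ and invariance of $\FedStar_\Omega$. The problem is the ``$\Rightarrow$'' direction, and you have flagged it yourself: the ``upgrade'' from the weak identity $\sigma\bigl(E_\xi(f)\FedProd\tau(g) + \tau(f)\FedProd E_\xi(g)\bigr) = 0$ to $E_\xi \equiv 0$. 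This is not a technical step that a routine degree-by-degree induction will dispatch; it is the entire content of the hard direction. The weak identity is nothing but a restatement of the hypothesis that $\Lie_\xi$ is a derivation of $\FedStar_\Omega$, while $E_\xi \equiv 0$, i.e. $\Lie_\xi\tau = \tau\Lie_\xi$, already yields both $[\nabla,\Lie_\xi] = 0$ and $\Lie_\xi\Omega = 0$ by your own (essentially sound) subsequent computation. So ``granting'' the upgrade amounts to granting the proposition. A direct attack on the weak identity is genuinely obstructed: $\sigma(\argument\FedProd\argument)$ sees only full $\omega^{-1}$-contractions of the symmetric parts, and nothing in your argument rules out cancellations between the $E_\xi(f)$-against-$\tau(g)$ and the $\tau(f)$-against-$E_\xi(g)$ contributions.

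The missing tool is precisely the deformed Cartan formula, \autoref{thm:fedosov_lie}, which the paper points out immediately after the proposition is ``one crucial ingredient in the proof'' it cites. Applied to $X_\xi$ and combined with $\FedDer^2 = 0$, it packages the whole commutation defect into a single quasi-inner term: with $h_\xi = \theta_\xi\tensor 1 + \frac{1}{2}D\theta_\xi\tensor 1 - \insa(\xi)r$ one obtains $[\FedDer,\Lie_\xi] = -\qadF\left(\FedDer h_\xi\right)$, in contrast to your decomposition $[\FedDer,\Lie_\xi] = [\nabla,\Lie_\xi] + \qadF(\Lie_\xi r)$, which mixes a tensorial operator with a quasi-inner one and therefore cannot be confronted with the projected, bilinear information that invariance of the star product provides. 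Once the defect is in quasi-inner form, invariance can be analyzed through the single, explicitly computable element $\FedDer h_\xi \in \FedSym\tensor\FedAsym^1$ and the question of its centrality; unwinding that computation is what produces exactly the two conditions $[\nabla,\Lie_\xi] = 0$ and $\Lie_\xi\Omega = 0$. Without this device (or an equivalent substitute for it), your ``$\Rightarrow$'' direction is incomplete at its decisive step.
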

In other words, both ingredients, the symplectic connection and the
series of closed two-forms, have to be $\lie{g}$-invariant. Therefore
we shall assume from now on that we have an \emph{invariant,
  torsion-free, symplectic connection} $\nabla$ fixed once and for
all. Its existence can be guaranteed under various assumptions on the
action of $\lie{g}$. One rather simple option is to assume that the
Lie algebra action integrates to a \emph{proper} action of $G$, for
which one has invariant connections. However, having an invariant
connection is far less restrictive than having a proper action.

One crucial ingredient in the proof of the previous statement, which
will also come in handy for our purposes later on, is an expression of
the Lie derivative on $\FedSym\tensor\FedAsym$ in terms of the Fedosov
derivation, the so-called deformed Cartan formula. To formulate it one
uses, for each $X\in\SymplSec(TM)$, the one-form
\begin{equation}
    \label{eq:thetaXDef}
    \theta_X = \ins_X\omega
\end{equation}
and the symmetrized covariant derivative acting on $\FedSym \tensor
\FedAsym$, explicitly given by
\begin{equation}
    \label{eq:SymCovDerDef}
    D
    = [\delta^*, \nabla]
    = (\drham x^i \tensor 1) \nabla_{\partial_i}.
\end{equation}
The the deformed Cartan formula is as follows \cite{neumaier:2002a,
  neumaier:2001a} for the Fedosov derivation $\FedDer$ based on $r$ as
in \eqref{eq:Fedosovr}:
\begin{lemma}
    \label{thm:fedosov_lie}%
    Let $X \in \SymplSec(M,\omega)$ be a symplectic vector field. Then
    the Lie derivative on $\FedSym\tensor\FedAsym$ and the Fedosov
    derivation $\FedDer$ are related as follows:
    \begin{equation}
        \label{eq:fedosov_lie}
        \Lie_X
        =
        \FedDer \insa(X) + \insa(X)\FedDer
        -
        \qadF \left(
            \theta_X \tensor 1
            +
            \frac{1}{2} D\theta_X \tensor 1
            - \insa(X)r
        \right).
    \end{equation}
\end{lemma}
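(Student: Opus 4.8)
The plan is to exploit that every operator appearing in \eqref{eq:fedosov_lie} is a graded derivation of $\FedProd$, so the asserted identity only has to be checked on a set of algebra generators. First I would record that $\Lie_X$ is a $\FedProd$-derivation of antisymmetric degree $0$: this uses $\Lie_X\omega = 0$, which holds precisely because $X$ is symplectic, so $\Lie_X$ preserves the bidifferential operator defining $\FedProd$ in \eqref{eq:fedosov_prod}. Next, the contraction $\insa(X)$ is a graded $\FedProd$-derivation of antisymmetric degree $-1$, since the deformation in \eqref{eq:fedosov_prod} only contracts symmetric slots while $\insa(X)$ is the usual antiderivation on the $\FedAsym$-factor. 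Together with \autoref{thm:fedosov_der} (making $\FedDer$ a $\FedProd$-derivation) and the fact that $\qadF(a)$ is always inner, both sides of \eqref{eq:fedosov_lie} are graded $\FedProd$-derivations of antisymmetric degree $0$. Hence it suffices to verify the identity on the generators of $\FedSym\tensor\FedAsym$, namely on $f\in\Cinfty(M)$, on symmetric degree-one elements $s\tensor 1$ with $s\in\Secinfty(T^*M)$, and on antisymmetric degree-one elements $1\tensor\alpha$.

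Before evaluating on generators I would simplify the right-hand side by computing the relevant graded commutators; write $\{A,B\} = AB + BA$ for the anticommutator of two antisymmetric-odd operators. A direct calculation gives $\{\delta,\insa(X)\} = \inss(X)$, since wedging in the $\drham x^i$ and contracting it against $X$ returns the symmetric insertion. Because $\insa(X)$ is a $\FedProd$-derivation, it graded-commutes with the inner derivation $\qadF(r)$ to produce the inner derivation of the derivative of its argument, i.e. $\{\qadF(r),\insa(X)\} = \qadF(\insa(X) r)$. Finally, since $\theta_X = \ins_X\omega$ has symmetric degree one, the $\FedProd$-commutator with it is read off from \eqref{eq:fedosov_prod}, and using $\omega^{ij}\omega_{ki} = -\delta^j_k$ one finds $\qadF(\theta_X\tensor 1) = -\inss(X)$. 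Feeding $\FedDer = -\delta + \nabla - \qadF(r)$ into $\{\FedDer,\insa(X)\}$ and inserting these three identities, the $\delta$-contribution $-\inss(X)$ cancels the $\theta_X$-contribution $+\inss(X)$, and the two occurrences of $\qadF(\insa(X)r)$ cancel as well. Thus \eqref{eq:fedosov_lie} is equivalent to the connection-only identity
\[
    \Lie_X = \insa(X)\nabla + \nabla\insa(X) - \frac{1}{2}\qadF(D\theta_X\tensor 1),
\]
in which all reference to $\delta$ and to $r$ has disappeared.

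It then remains to prove this reduced identity on generators. On functions and on antisymmetric generators $1\tensor\alpha$ it is exactly the classical Cartan formula $\Lie_X\alpha = \ins_X\drham\alpha + \drham\ins_X\alpha$: here $\nabla(1\tensor\alpha) = 1\tensor\drham\alpha$ because $\nabla$ is torsion-free, while $\qadF(D\theta_X\tensor 1)$ drops out because, having an argument of symmetric degree two, it acts only on the symmetric factor and annihilates $1\tensor\alpha$ (and every function). On a symmetric generator $s\tensor 1$ the anticommutator collapses to $\{\nabla,\insa(X)\}(s\tensor1) = (\nabla_X s)\tensor 1$, so the entire content is the purely tensorial identity $\Lie_X - \nabla_X = -\frac{1}{2}\qadF(D\theta_X\tensor 1)$ on one-forms: the left side is the derivation induced by the endomorphism $\nabla X$ of $TM$, the right side the one induced by $\omega^{-1}\nabla\theta_X$, and they are matched using $\nabla\omega = 0$ together with the fact that $X$ symplectic forces $\nabla\theta_X$ to be symmetric (equivalently $\drham\theta_X = 0$), which after lowering with $\omega$ is the identity $-\omega^{im}\nabla_i(\theta_X)_k = \nabla_k X^m$. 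I expect this final symmetric-slot computation, together with pinning down the precise signs and numerical factors in the three auxiliary commutator identities of the previous paragraph, to be the only genuinely delicate point; it is also the one step where the symplectic hypothesis on $X$ is indispensable.
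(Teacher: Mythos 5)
The paper contains no proof of this lemma to compare against: it is quoted as the ``deformed Cartan formula'' from \cite{neumaier:2002a, neumaier:2001a}, so your argument has to be judged on its own merits --- and it holds up. The structural step is sound: $\Lie_X$ is a $\FedProd$-derivation exactly because $X$ symplectic means $\Lie_X$ annihilates the Poisson tensor entering \eqref{eq:fedosov_prod}; $\insa(X)$ is a graded $\FedProd$-derivation because the exponential in \eqref{eq:fedosov_prod} contracts only symmetric slots; hence both sides of \eqref{eq:fedosov_lie} are graded $\FedProd$-derivations of antisymmetric degree zero and may be compared on generators. Your three commutator identities are correct in the paper's conventions: $\delta\insa(X)+\insa(X)\delta=\inss(X)$ by a direct check; $\qadF(r)\insa(X)+\insa(X)\qadF(r)=\qadF(\insa(X)r)$ since the graded commutator of a graded derivation with an inner derivation is the inner derivation of the image (both operators here being odd, the graded commutator is the anticommutator); and $\qadF(\theta_X\tensor 1)=-\inss(X)$, the sign coming from $\omega^{ij}(\theta_X)_i=\omega^{ij}X^k\omega_{ki}=-X^j$ with the paper's convention $\omega^{ik}\omega_{jk}=\delta^i_j$. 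Substituting $\FedDer=-\delta+\nabla-\qadF(r)$ then indeed cancels all dependence on $\delta$ and on $r$, leaving the reduced identity $\Lie_X=\nabla\insa(X)+\insa(X)\nabla-\frac{1}{2}\qadF(D\theta_X\tensor 1)$. Its verification on generators goes exactly as you sketch: on functions and $1\tensor\alpha$ it is Cartan's formula plus torsion-freeness ($\nabla(1\tensor\alpha)=1\tensor\drham\alpha$), and on $s\tensor 1$ one gets $(\Lie_X s-\nabla_X s)_j=s_k\nabla_j X^k$ on the left, while on the right the insertion into the symmetric two-tensor $D\theta_X$ produces a factor $2$ that absorbs the $\frac{1}{2}$, giving $-\omega^{kl}\nabla_k(\theta_X)_j\,s_l$; these agree via $\nabla\omega=0$ and the symmetry $\nabla_k(\theta_X)_j=\nabla_j(\theta_X)_k$, which is $\drham\theta_X=0$, i.e.\ precisely where symplecticity of $X$ enters. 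So the delicate points you flagged are exactly the right ones, and they all close.

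Two small things to make explicit in a write-up. First, $\FedSym\tensor\FedAsym$ consists of formal series over all symmetric degrees, so ``agreeing on generators'' only gives agreement on the polynomial subalgebra; you should add that every operator in sight preserves the symmetric-degree filtration (for $\qadF(D\theta_X\tensor 1)$ note the second-order terms of the exponential cancel in the commutator), hence is continuous in the associated topology, and the identity extends by completeness. Second, $\qadF(\insa(X)r)$ should be accompanied by the remark that any central, symmetric-degree-zero component of $\insa(X)r$ has vanishing $\adF$, so the prefactor $\frac{1}{\fparam}$ creates no negative powers. Neither is a gap in substance; with them included your proof is a complete, self-contained replacement for the external citation.
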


As it turns out, there is also a very convenient expression for the
Fedosov-Taylor series of any quantum Hamiltonian $\qham{J}$ of a
Fedosov star product $\FedStar_\Omega$. Later on, the crucial part in
the following lemma will be that for once, $\tau(\qham{J})$ only
depends on $\qham{J}$ in symmetric and antisymmetric degree $0$ and
secondly, that the only dependence on $\Omega$ lies in the summand
$\insa r$. The remaining parts only depend on the symplectic 2-form
$\omega$ and the symplectic, $\lie{g}$-invariant connection $\nabla$
on $(M,\omega)$. From \cite{mueller-bahns.neumaier:2004a} we recall
the following formulation:
\begin{lemma}
    \label{thm:fedosov_qham_taylor}%
    Let $\FedStar_\Omega$ be a Fedosov star product constructed from
    $\Omega\in\fparam\Closed^2(M)\formal$ with quantum Hamiltonian
    $\qham{J}$. Then the Fedosov-Taylor series of $\qham{J}$ is given
    by
    \begin{equation}
        \label{eq:fedosov_qham_taylor}
        \tau\left(\qham{J}(\xi) \right)
        =
        \qham{J}(\xi)
        + \theta_\xi \tensor 1
        + \frac{1}{2} D\theta_\xi \tensor 1
        + \insa(\xi) r
    \end{equation}
    for all $\xi\in\lie{g}$ where $\theta_\xi = \ins_\xi\omega$.
\end{lemma}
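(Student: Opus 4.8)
The plan is to exploit the characterization of the Fedosov--Taylor series as the \emph{unique} element of $\ker\FedDer$ of antisymmetric degree $0$ whose image under $\sigma$ is the prescribed function. Thus it suffices to verify that the proposed right-hand side
\[
\Psi(\xi) := \qham{J}(\xi) + \theta_\xi\tensor 1 + \tfrac{1}{2}D\theta_\xi\tensor 1 + \insa(\xi)r
\]
has symbol $\sigma(\Psi(\xi))=\qham{J}(\xi)$ and lies in $\ker\FedDer$. The symbol is immediate from a degree count: $\theta_\xi\tensor 1$ has symmetric degree $1$, $\tfrac12 D\theta_\xi\tensor 1$ has symmetric degree $2$, and since every term of $r\in\FedSym_2\tensor\FedAsym^1$ has symmetric degree at least $1$, so does $\insa(\xi)r$; as $\sigma$ projects onto symmetric and antisymmetric degree $0$, only $\qham{J}(\xi)$ survives, and all summands have antisymmetric degree $0$.

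Next I would reduce the closedness $\FedDer\Psi(\xi)=0$ to a single identity. Because $\qham{J}(\xi)$ is a function we have $\delta\qham{J}(\xi)=0$, and since functions are central for $\FedProd$ also $\qadF(r)\qham{J}(\xi)=0$; hence $\FedDer\qham{J}(\xi)=\nabla\qham{J}(\xi)=1\tensor\drham\qham{J}(\xi)$. Everything therefore comes down to showing
\[
\FedDer\left(\theta_\xi\tensor 1 + \tfrac12 D\theta_\xi\tensor 1 + \insa(\xi)r\right) = -\,1\tensor\drham\qham{J}(\xi).
\]
To evaluate the left-hand side I would split off the purely geometric part from the $r$-part. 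For the geometric part one uses the standard identities for $\delta$, $\nabla$ and $D=[\delta^*,\nabla]$ applied to $\theta_\xi$, starting from $\delta(\theta_\xi\tensor 1)=1\tensor\theta_\xi$, to combine $\nabla(\theta_\xi\tensor 1)$ with $\delta(\tfrac12 D\theta_\xi\tensor 1)$ up to curvature terms. For the $r$-part the key tool is the deformed Cartan formula of \autoref{thm:fedosov_lie} applied to $a=r$: invariance of $\nabla$ and $\Omega$ gives $\Lie_\xi r=0$, so writing $A_\xi:=\theta_\xi\tensor 1+\tfrac12 D\theta_\xi\tensor 1-\insa(\xi)r$ the formula yields
\[
\FedDer\insa(\xi)r = -\insa(\xi)\FedDer r + \qadF(A_\xi)r,
\]
into which one substitutes the Fedosov equation \eqref{eq:Fedosovr} for $\FedDer r$, expressing it through $R$, $\Omega$, $r\FedProd r$ and $\delta,\nabla$ of $r$.

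The hardest step is to show that all contributions of positive symmetric degree cancel, so that $\FedDer\Psi(\xi)$ collapses to a central one-form $1\tensor(\drham\qham{J}(\xi)+\gamma_\xi)$, where $\gamma_\xi$ is built only from $\omega$, $\nabla$ and $\Omega$. This cancellation is where the curvature terms coming from $D=[\delta^*,\nabla]$, the $\FedProd$-commutators $\qadF(A_\xi)r$, and the $\tfrac1\fparam r\FedProd r$ from the Fedosov equation must conspire, and I expect this bookkeeping to be the main obstacle. Once the reduction to a central one-form is in place, the quantum Hamiltonian condition $\Lie_\xi=-\qad_\star(\qham{J}(\xi))$ enters: transported through $\tau$ it becomes the generating relation $\Lie_\xi=-\qadF(\tau\qham{J}(\xi))$ on $\ker\FedDer$, which forces the closed one-form $\gamma_\xi$ to equal $-\drham\qham{J}(\xi)$, since on the connected manifold $M$ a quantum Hamiltonian is determined up to an additive constant that does not affect $\drham$. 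This gives $\FedDer\Psi(\xi)=0$, and together with the symbol computation the uniqueness of the Fedosov--Taylor lift yields $\Psi(\xi)=\tau(\qham{J}(\xi))$, as claimed.
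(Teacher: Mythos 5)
First, note that the paper itself does not prove this lemma: it is quoted from M\"uller-Bahns--Neumaier \cite{mueller-bahns.neumaier:2004a}, so your attempt can only be judged on its own merits. Your overall architecture is the correct one and matches the cited source: since $\tau$ is a linear bijection onto $\ker\FedDer$ in antisymmetric degree $0$ with inverse $\sigma$, it suffices to show $\sigma(\Psi(\xi))=\qham{J}(\xi)$ and $\FedDer\Psi(\xi)=0$. The symbol computation is essentially fine, with one caveat: $r\in\FedSym_2\tensor\FedAsym^1$ refers to the \emph{total-degree} filtration, which by itself would still allow symmetric-degree-$0$ contributions such as $\fparam\,(1\tensor\alpha)$; what you actually need is that $r$ is a fixed point of \eqref{eq:Fedosovr}, hence lies in the image of $\delta^{-1}$ and so has symmetric degree $\geq 1$. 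Likewise the reduction via $\FedDer\qham{J}(\xi)=1\tensor\drham\qham{J}(\xi)$ to a single identity for the geometric part is correct.

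However, there are two genuine gaps, and they sit exactly where the content of the lemma is. First, the cancellation of all terms of positive symmetric degree in $\FedDer\bigl(\theta_\xi\tensor 1+\frac{1}{2}D\theta_\xi\tensor 1+\insa(\xi)r\bigr)$ is not carried out; you defer it yourself (``I expect this bookkeeping to be the main obstacle''). The ingredients you list (closedness of $\theta_\xi$ for a symplectic vector field, $D=[\delta^*,\nabla]$, the deformed Cartan formula applied to $r$ together with $\Lie_\xi r=0$, and the Fedosov equation for $\FedDer r$) are the right ones, and the computation does collapse everything to $-1\tensor\ins_\xi(\omega+\Omega)$, but a proposal that omits this calculation has not proved the lemma. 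Second, your concluding step is logically insufficient: once that geometric identity is in hand, closedness of $\Psi(\xi)$ is \emph{equivalent} to $\drham\qham{J}(\xi)=\ins_\xi(\omega+\Omega)$, i.e.\ precisely the cocycle characterization of quantum Hamiltonians (\autoref{thm:class_qham_cocycle}, likewise taken from \cite{mueller-bahns.neumaier:2004a}). Your claim that the generating relation $\Lie_\xi=-\qadF(\tau(\qham{J}(\xi)))$ ``forces'' $\gamma_\xi=-\drham\qham{J}(\xi)$ because a quantum Hamiltonian is determined up to additive constants does not establish this: uniqueness up to constants is a statement about two quantum Hamiltonians of the same star product and says nothing about what $\drham\qham{J}(\xi)$ actually is. To extract the cocycle identity from $\Lie_\xi=-\qad_\star(\qham{J}(\xi))$ one has to transport the condition through $\tau$, compare it with the deformed Cartan formula of \autoref{thm:fedosov_lie} on $\image\tau$, and isolate the degree-one information; that is a separate, nontrivial argument which your sketch replaces with an appeal to uniqueness. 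In short: right strategy, but both load-bearing steps are missing.
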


Finally, we will need a special class of equivalences between Fedosov
star products in order to compare quantum Hamiltonians of different
star products. The construction in the following lemma will allow us
to assign to each pair $(\Omega, C) \in \fparam \Closed^2(M)\formal
\times \fparam \Formen^1(M)\formal$ an equivalence from the Fedosov
star product constructed with $\Omega$ to the one constructed from
$\Omega - \drham C$. Implicitly, this construction is in Fedosov's
book in \cite[Sect.~5.5]{fedosov:1996a} but we need the more
particular and more explicit formula from
\cite[Sect.~3.5.1.1]{neumaier:2001a}:
\begin{lemma}
    \label{thm:fedosov_equi}
    Let $\FedStar_\Omega$ and $\FedStar_{\Omega'}$ be two Fedosov star
    products constructed from $\Omega, \Omega' \in \fparam
    \Closed^2(M) \formal$ respectively and let additionally $\Omega -
    \Omega' = \drham C$ for a fixed $C \in \fparam \Formen^1(M)
    \formal$. Then there is an equivalence $T_C$ from
    $\FedStar_\Omega$ to $\FedStar_{\Omega'}$ given by
    \begin{equation}
        \label{eq:fedosov_equi_def}
        T_C = \sigma \circ \mathcal{A}_h \circ \tau,
    \end{equation}
    where $\mathcal{A}_h = \exp\left\{\qadF(h)\right\}$ and
    $h\in\FedSym_3$ is obtained as the unique solution of
    \begin{equation}
        \label{eq:fedosov_equi_constr}
        h
        =
        C \tensor 1
        +
        \delta^{-1}\left(
            \nabla h
            -
            \qadF(r) h
            -
            \frac{\qadF(h)}
            {\exp\left\{\qadF(h)\right\} - \id}
            (r'-r)
        \right)
    \end{equation}
    with $\sigma(h) = 0$. Furthermore, the Fedosov derivation of $h$ is
    given by
    \begin{equation}
        \label{eq:fedosov_equi_prop}
        \FedDer h
        =
        -1 \tensor C
        + \frac{\qadF(h)}{\exp\left\{\qadF(h)\right\} - \id}
        (r'-r).
    \end{equation}
\end{lemma}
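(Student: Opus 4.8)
The plan is to construct the inner automorphism $\mathcal{A}_h = \exp\{\qadF(h)\}$ of $(\FedSym\tensor\FedAsym,\FedProd)$ and to show that it intertwines the two Fedosov derivations built from $r$ and $r'$ via \eqref{eq:fedosov_der_def}, i.e. $\mathcal{A}_h \FedDer \mathcal{A}_h^{-1} = \FedDer'$. Granting this, the assertion that $T_C$ is an equivalence is purely formal: $\tau$ and $\tau'$ are the inverses of $\sigma$ on $\ker\FedDer$ and $\ker\FedDer'$ respectively, and since $\mathcal{A}_h$ is a $\FedProd$-automorphism carrying $\ker\FedDer$ bijectively onto $\ker\FedDer'$, one gets $\tau'(T_C f) = \mathcal{A}_h\tau(f)$; then, using that $\mathcal{A}_h$ preserves $\FedProd$ and that $\tau(f)\FedProd\tau(g)\in\ker\FedDer$, one checks $T_C(f\FedStar_\Omega g) = (T_C f)\FedStar_{\Omega'}(T_C g)$ together with $T_C 1 = 1$. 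That $T_C = \id + O(\fparam)$ is a series of differential operators follows from $h\in\FedSym_3$ and the differential nature of $\tau,\sigma,\mathcal{A}_h$.

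To reduce the intertwining to the two displayed formulas, I would first record the conjugation identity. From the derivation rule $[\FedDer,\qadF(h)] = \qadF(\FedDer h)$ (valid because $h$ has antisymmetric degree $0$) and $[\qadF(a),\qadF(b)] = \qadF(\qadF(a)b)$, the usual $e^{\mathrm{ad}}$-expansion yields
\[ \mathcal{A}_h \FedDer \mathcal{A}_h^{-1} = \FedDer - \qadF\!\left( \frac{\exp\{\qadF(h)\}-\id}{\qadF(h)}\, \FedDer h \right). \]
Since $\FedDer' = \FedDer - \qadF(r'-r)$ and $1\tensor C$ is $\FedProd$-central, substituting \eqref{eq:fedosov_equi_prop} and using that $\qadF(h)$ annihilates $1\tensor C$ while $\tfrac{e^x-1}{x}\cdot\tfrac{x}{e^x-1}=1$ gives $\tfrac{\exp\{\qadF(h)\}-\id}{\qadF(h)}\FedDer h = -1\tensor C + (r'-r)$, whose $\qadF$ is exactly $\qadF(r'-r)$. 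Hence \eqref{eq:fedosov_equi_prop} implies $\mathcal{A}_h\FedDer\mathcal{A}_h^{-1} = \FedDer'$, and it remains to solve \eqref{eq:fedosov_equi_constr} and to prove \eqref{eq:fedosov_equi_prop}.

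Existence and uniqueness of $h$ is the standard contraction argument: the right-hand side of \eqref{eq:fedosov_equi_constr} raises the total degree (the outer $\delta^{-1}$ raises $\degt$ by one, while $C\tensor 1$, $r$ and $r'-r$ carry positive total degree), so by the fixed-point principle attached to the filtration \eqref{eq:TheFiltration} there is a unique $h\in\FedSym_3$, with $\sigma(h)=0$ because $\sigma\circ\delta^{-1}=0$ and $\sigma(C\tensor 1)=0$. I would then show that \eqref{eq:fedosov_equi_constr} is exactly the $\delta^{-1}$-normalized form of \eqref{eq:fedosov_equi_prop}. Writing $\FedDer h = -\delta h + (\nabla-\qadF(r))h$ and applying $\delta$ to \eqref{eq:fedosov_equi_constr}, with $\delta(C\tensor 1)=1\tensor C$ and the homotopy \eqref{eq:fedosov_delta_homotopy}, one finds $\FedDer h = -1\tensor C + \frac{\qadF(h)}{\exp\{\qadF(h)\}-\id}(r'-r) + W$, where the error is $W = \delta^{-1}\delta Y$ with $Y$ the argument of $\delta^{-1}$ in \eqref{eq:fedosov_equi_constr}; in particular $\sigma(W)=0$ and $\delta^{-1}W=0$.

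The crux, which I expect to be the main obstacle, is to prove $W=0$. My plan is first to establish $\FedDer W = 0$: since $\FedDer^2 h = 0$ by \autoref{thm:fedosov_der}, this is equivalent to the identity
\[ \FedDer\!\left( \frac{\qadF(h)}{\exp\{\qadF(h)\}-\id}(r'-r) \right) = 1\tensor\drham C = 1\tensor(\Omega-\Omega'), \]
and this is the genuinely computational point: one verifies it from $[\FedDer,\qadF(h)] = \qadF(\FedDer h)$ together with the Fedosov equation \eqref{eq:Fedosovr} for both $r$ and $r'$ (equivalently $\FedDer^2=(\FedDer')^2=0$, which fixes $\FedDer r$ and $\FedDer r'$ in terms of $R$, $\Omega$ and $\Omega'$) and the power-series identities for $x/(e^x-1)$. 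Once $\FedDer W=0$ is known the argument closes: $W$ has antisymmetric degree $1$, so the homotopy \eqref{eq:FedosovHomotopy} gives $W = \FedDer V$ with $V = \FedDer^{-1}W\in\image\delta^{-1}$, whence $\sigma(V)=0$; feeding $W = \FedDer V = -\delta V + (\nabla-\qadF(r))V$ into $\delta^{-1}W=0$ and using \eqref{eq:fedosov_delta_homotopy} yields $V = \sigma(V) + \delta^{-1}(\nabla-\qadF(r))V$, whose operator $\delta^{-1}(\nabla-\qadF(r))$ raises the total degree and is therefore contracting. With $\sigma(V)=0$ the unique solution is $V=0$, so $W=0$ and \eqref{eq:fedosov_equi_prop} holds, completing the proof.
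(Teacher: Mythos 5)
Your architecture tracks the paper's proof almost step for step: the automorphism $\mathcal{A}_h$, the conjugation identity
$\mathcal{A}_h \FedDer \mathcal{A}_{-h} = \FedDer - \qadF\bigl(\tfrac{\exp\{\qadF(h)\}-\id}{\qadF(h)}\,\FedDer h\bigr)$,
the reduction of the intertwining property to \eqref{eq:fedosov_equi_prop}, the contraction argument giving existence and uniqueness of $h$ in \eqref{eq:fedosov_equi_constr}, and the introduction of the error term $W$ with $\sigma(W)=0$ and $\delta^{-1}W=0$ (the paper works with $B=-W$). All of that is correct. The gap sits exactly at the step you yourself flag as the crux. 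You propose to first establish $\FedDer W = 0$ by ``verifying'' the identity
\begin{equation*}
    \FedDer\left(\frac{\qadF(h)}{\exp\{\qadF(h)\}-\id}(r'-r)\right)
    =
    1\tensor(\Omega-\Omega')
\end{equation*}
from $[\FedDer,\qadF(h)]=\qadF(\FedDer h)$, the Fedosov equation \eqref{eq:Fedosovr} for $r$ and $r'$, and power-series identities. This verification is circular: expanding $\frac{\qadF(h)}{\exp\{\qadF(h)\}-\id}$ in powers of $\qadF(h)$, every commutator $[\FedDer,\qadF(h)^n]$ produces insertions of $\qadF(\FedDer h)$, and the only handle you have on $\FedDer h$ is the very equation in dispute, namely $\FedDer h = -1\tensor C + \frac{\qadF(h)}{\exp\{\qadF(h)\}-\id}(r'-r) + W$. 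Substituting it back, the computation does \emph{not} yield $\FedDer W = 0$; it yields an identity of the form $\FedDer W = R(W)$, where $R$ is a linear operator built from sandwiches $\adF(h)^t\adF(W)\adF(h)^{s-2-t}$ acting on $(r'-r)$. This is precisely the technical result the paper cites from Neumaier ($\FedDer B = R_{h,r',r}(B)$), and its content is that the $W$-\emph{independent} terms cancel — the $W$-dependent ones do not. The identity you want to verify is equivalent to $\FedDer W = 0$, which is true only once $W=0$ is known, so it cannot serve as an intermediate step toward proving $W=0$.

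The repair is to accept the weaker statement $\FedDer W = R(W)$ and then close the argument by a fixed-point argument that keeps the $R(W)$ term: apply $\delta^{-1}$, use $\delta^{-1}W = 0$, $\sigma(W)=0$ and the homotopy \eqref{eq:fedosov_delta_homotopy} together with $\FedDer = -\delta + \nabla - \qadF(r)$ to obtain $W = \delta^{-1}\left(\nabla W - \qadF(r)W - R(W)\right)$; since the right-hand side is linear in $W$ and raises the total degree, $W=0$ is the unique solution. This is exactly the paper's route. Your endgame — writing $W = \FedDer V$ with $V=\FedDer^{-1}W$ via \eqref{eq:FedosovHomotopy} and running a contraction on $V$ — is sound as written, but it is predicated on the hypothesis $\FedDer W = 0$, which, as explained, you cannot obtain first; so as it stands the proposal defers the genuine difficulty of the lemma to a step that fails.
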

\begin{proof}
    In \cite[Sect.~3.5.1.1]{neumaier:2001a}, the case of Fedosov star
    products of Wick type was considered. The argument transfers
    immediately to the more general situation we need
    here. Nevertheless, for convenience we sketch the proof.  First of
    all, let us consider the map $\mathcal{A}_h\colon
    \FedSym\tensor\FedAsym \longrightarrow \FedSym\tensor\FedAsym$
    given by $\mathcal{A}_h = \exp\left\{ \qadF\left( h \right)
    \right\}$ for any $h\in\FedSym_3\tensor\FedAsym^0$. Counting
    degrees, one finds that $\qadF\left( h \right)$ increases the
    total degree by at least one, what guarantees that $\mathcal{A}_h$
    is well-defined. Furthermore, since $\qadF\left( h \right)$ is a
    graded derivation of $\FedProd$, we have $\mathcal{A}_h\left( a
        \FedProd b \right) = \mathcal{A}_h(a) \FedProd
    \mathcal{A}_h(b)$ for all $a,b\in\FedSym\tensor\FedAsym$ and thus
    $\mathcal{A}_h$ is actually an algebra automorphism of $\left(
        \FedSym\tensor\FedAsym, \FedProd \right)$ with inverse given
    by $\mathcal{A}_h^{-1} = \mathcal{A}_{-h}$.

    Next, we propose that $S_h = \sigma \circ
    \mathcal{A}_h \circ \tau$ is an equivalence from $\FedStar_\Omega$
    to $\FedStar_{\Omega'}$ if
    \begin{equation}
        \label{eq:fedosov_equi_proof_1}
        \FedDer'
        =
        \mathcal{A}_h \circ \FedDer \circ \mathcal{A}_{-h}
    \end{equation}
    holds, where we denoted by $\FedDer'$ the Fedosov derivation
    constructed from $\Omega'$. Here we quickly note that, since
    $\tau$ maps functions into $\ker \FedDer$, we have $\FedDer
    \tau(f) = 0$ for all $f\in\Cinfty(M)\formal$ and hence also
    $\FedDer' \mathcal{A}_h \tau(f) = \mathcal{A}_h \FedDer \tau(f) =
    0$ because of \eqref{eq:fedosov_equi_proof_1}. Additionally, with
    the help of the Fedosov-Taylor series $\tau'$ constructed from
    $\FedDer'$, one can easily observe that $\mathcal{A}_h \tau(f) =
    (\tau' \circ \sigma) \left( \mathcal{A}_h \tau(f) \right) =
    \tau'\left( S_h(f) \right)$, which finally enables us to show
    \begin{equation*}
        S_h\left( f\FedStar_\Omega g \right)
        =
        \sigma\left(
            \mathcal{A}_h \tau(f) \FedProd \mathcal{A}_h \tau(g)
        \right)
        =
        \sigma\left(
            \tau'\left(S_h(f)\right)
            \FedProd
            \tau'\left(S_h(g)\right)
        \right)
        =
        S_h(f) \FedStar_{\Omega'} S_h(f).
    \end{equation*}
    Again, the inverse of $S_h$ is obviously given by $S_h^{-1} =
    \sigma \circ \mathcal{A}_{-h} \circ \tau'$.

    Given these preliminary considerations, the goal of this proof
    will be to solve \eqref{eq:fedosov_equi_proof_1} for $h$. To this
    end, let us rewrite said equation by using the definition of
    $\mathcal{A}_h$, which results in
    \begin{equation*}
        \FedDer'
        =
        \mathcal{A}_h \FedDer \mathcal{A}_{-h}
        =
        \FedDer
        -
        \qadF\left(
            \frac{\exp\left\{\qadF(h)\right\} - \id}{\qadF(h)}
        \right)
        \left(\FedDer h\right),
    \end{equation*}
    where we again exploited the fact that $\adF(h)$ and $\FedDer$ are
    graded $\FedProd$-derivations and thus $\left[ \adF(h), \FedDer
    \right] = -\adF\left( \FedDer h \right)$. Comparing $\FedDer$ and
    $\FedDer'$ from \eqref{eq:fedosov_der_def} we see that the above
    equation is satisfied if $r' - r - \frac{\exp\left\{ \qadF\left( h
          \right) \right\} - \id}{\qadF\left( h \right)} \left(
        \FedDer h \right)$ is $\FedProd$-central. We claim here that
    we can even find an $h(C)\in\FedSym_3$ such that
    \begin{equation}
        \label{eq:fedosov_equi_proof_2}
        r' - r -
        \frac{\exp\left\{\qadF(h)\right\} - \id}{\qadF(h)}
        \left(\FedDer h\right)
        =
        1 \tensor C,
    \end{equation}
    where $C$ is the series of 1-forms with $\drham C = \Omega -
    \Omega'$ from the prerequisites. We further claim that this $h$
    can be obtained as the unique solution of
    \eqref{eq:fedosov_equi_constr} with $\sigma(h) = 0$. First of all,
    from counting the involved degrees we know that
    \eqref{eq:fedosov_equi_constr} has indeed a unique solution. We
    will now proceed to use this solution to define
    \begin{equation*}
        B
        =
        \frac{\qadF(h)}{\exp\left\{\qadF(h)\right\} - \id}
        \left( r' - r \right)
        -
        \FedDer h
        -
        1 \tensor C.
    \end{equation*}
    At this point we will merely cite a technical result from
    \cite[Sect.~3.5.1.1]{neumaier:2001a}, which essentially is only a
    tedious calculation, concerning the Fedosov derivation of $B$. One
    obtains
    \begin{equation*}
        \begin{split}
            \FedDer B
            &=
            \frac{\qadF(h)}{\exp\left\{\qadF(h)\right\} - \id}
            \sum\limits_{s=0}^\infty
            \frac{1}{s!}
            \left(\frac{1}{\fparam}\right)^{s-1}
            \sum\limits_{t=0}^{s-2}
            \adF(h)^t \adF(B) \adF(h)^{s-2-t} \times \\
            &\quad\times
            \frac{\qadF(h)}{\exp\left\{\qadF(h)\right\} - \id}
            \left(r' - r \right)
            =
            R_{h,r',r}(B),
        \end{split}
    \end{equation*}
    where we denote the right hand side as a linear operator $R_{h,
      r', r}(B)$ acting on $B$.  Applying $\delta^{-1}$ to both sides
    and using $\delta^{-1} B = 0$, $\sigma(h) = 0$ and
    \eqref{eq:fedosov_delta_homotopy} as well as
    \eqref{eq:fedosov_der_def}, we arrive at
    \begin{equation*}
        B
        =
        \delta^{-1} \left(
            \nabla B - \qadF(r) B - R_{h,r',r}(B)
        \right).
    \end{equation*}
    Yet again, by counting degrees, we observe that the above equation
    has a unique solution and that $B=0$ is this solution. From here
    it is easy to see that $B=0$ is equivalent to $h$ satisfying
    \eqref{eq:fedosov_equi_proof_2} which completes the construction
    of $T_C$ as $T_C = S_{h(C)}$.
\end{proof}
\begin{corollary}
    \label{thm:fedosov_inv_equi}%
    Let $\Omega, \Omega' \in \fparam\Closed^2(M)\Inv\formal$ with
    $\Omega - \Omega' = \drham C$ for $C \in
    \fparam\Formen^1(M)\Inv\formal$ and $h$ as in
    \eqref{eq:fedosov_equi_constr}.
    \begin{corollarylist}
    \item For all $\xi \in \lie{g}$ we have
        \begin{equation}
            \label{eq:fedosov_inv_equi}
            \Lie_\xi h = 0.
        \end{equation}
    \item For all $\xi \in \lie{g}$ we have
        \begin{equation}
            \label{eq:TCinvariant}
            \Lie_\xi \circ T_C = T_C \circ \Lie_\xi,
        \end{equation}
        i.e. the equivalence $T_C$ is $\lie{g}$-invariant.
    \end{corollarylist}
\end{corollary}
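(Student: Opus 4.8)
The plan is to reduce everything to the single fact that $\Lie_\xi$ commutes with all the building blocks of the Fedosov calculus, and then to exploit the uniqueness statements on which \autoref{thm:fedosov_equi} rests. First I would record the relevant commutation relations. The operators $\delta$ and $\delta^*$ are tautological symmetric-to-antisymmetric contractions involving neither $\omega$ nor $\nabla$, so they—and hence the normalization $\delta^{-1}$—commute with $\Lie_\xi$; the invariance hypothesis $[\nabla,\Lie_\xi]=0$ gives $[\Lie_\xi,\nabla]=0$ for the operator in \eqref{eq:deltaDef}; and since $X_\xi$ is symplectic, $\Lie_\xi\omega=0$, so $\Lie_\xi$ is a graded derivation of $\FedProd$. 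Consequently $\Lie_\xi$ preserves the subspace of $\lie{g}$-invariant elements and satisfies $[\Lie_\xi,\qadF(a)]=\qadF(\Lie_\xi a)$. Applying these facts, together with $\Lie_\xi R=0$ for the curvature of the invariant connection and $\Lie_\xi\Omega=\Lie_\xi\Omega'=0$, to the defining equation \eqref{eq:Fedosovr} shows—by the very contraction argument that produced $r$ and $r'$—that $\Lie_\xi r=\Lie_\xi r'=0$; this is the invariance underlying \autoref{proposition:fedosov_inv_star}.

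For the first claim I would avoid differentiating the nonlinear fixed-point equation \eqref{eq:fedosov_equi_constr} directly and instead run Banach's fixed point theorem inside the invariant subspace. Let $\mathcal{F}$ denote the contraction on $\FedSym_3$ whose unique fixed point is $h$. Because $C$, $r$ and $r'$ are invariant and because $\delta^{-1}$, $\nabla$ and $\FedProd$ all commute with $\Lie_\xi$, the map $\mathcal{F}$ sends $\lie{g}$-invariant elements to $\lie{g}$-invariant elements: every summand on the right-hand side of \eqref{eq:fedosov_equi_constr}, including the series $\frac{\qadF(h)}{\exp\{\qadF(h)\}-\id}(r'-r)$, is annihilated by $\Lie_\xi$ as soon as its argument is. The invariant subspace is closed in the total-degree filtration, so $\mathcal{F}$ restricts to a contraction there and has a unique invariant fixed point. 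Since this element is also a fixed point of $\mathcal{F}$ on all of $\FedSym_3$, the uniqueness quoted in the proof of \autoref{thm:fedosov_equi} forces it to equal $h$. Hence $\Lie_\xi h=0$, which is \eqref{eq:fedosov_inv_equi}.

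The second claim then follows from the factorization $T_C=\sigma\circ\mathcal{A}_h\circ\tau$ by checking that $\Lie_\xi$ commutes with each factor. It commutes with $\sigma$ because $\Lie_\xi$ preserves the symmetric and antisymmetric degrees and thus respects the projection onto degree $(0,0)$. It commutes with $\tau$: the relations of the first paragraph give $[\Lie_\xi,\FedDer]=0$, so $\Lie_\xi$ preserves $\ker\FedDer$, and since $\tau$ is the unique right inverse of $\sigma$ with image in $\ker\FedDer$, the element $\Lie_\xi\tau(f)$ lies in $\ker\FedDer$ and projects to $\Lie_\xi f$, whence $\Lie_\xi\tau(f)=\tau(\Lie_\xi f)$. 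Finally, \eqref{eq:fedosov_inv_equi} yields $[\Lie_\xi,\qadF(h)]=\qadF(\Lie_\xi h)=0$, so $\Lie_\xi$ commutes with every power of $\qadF(h)$ and therefore with $\mathcal{A}_h=\exp\{\qadF(h)\}$. Composing the three commutations gives $\Lie_\xi\circ T_C=T_C\circ\Lie_\xi$, which is \eqref{eq:TCinvariant}.

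The only genuinely delicate point is the first claim. The naive route—applying $\Lie_\xi$ to \eqref{eq:fedosov_equi_constr} and solving the resulting equation for $\Lie_\xi h$—does work, since the nonlinear term linearizes into a total-degree-raising operator acting on $\Lie_\xi h$ and the ensuing homogeneous contraction has only the trivial solution; but the invariant-subspace argument is cleaner because it sidesteps the bookkeeping of $\Lie_\xi$ applied to the full series in $\qadF(h)$. Everything else is formal manipulation with operators already known to commute with $\Lie_\xi$.
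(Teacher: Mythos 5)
Your proposal is correct and is essentially the paper's own (implicit) argument: the corollary is stated without proof as an immediate consequence of \autoref{thm:fedosov_equi}, the intended reasoning being exactly what you spell out — all ingredients of the fixed-point equation \eqref{eq:fedosov_equi_constr} (namely $C$, $r$, $r'$, $\delta^{-1}$, $\nabla$ and $\FedProd$) are $\lie{g}$-invariant or commute with $\Lie_\xi$, so by uniqueness of the fixed point $h$ is invariant, and invariance of $T_C = \sigma \circ \mathcal{A}_h \circ \tau$ then follows factor by factor. Your care with the completeness of the invariant subspace and with $[\Lie_\xi, \tau] = 0$ via $[\Lie_\xi, \FedDer] = 0$ fills in precisely the details the paper leaves to the reader.
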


We come now to the key lemma needed for the proof of our main
theorem. If we are interested in the equivariant classification, we
need to know the effect of an invariant equivalence transformation on
quantum momentum maps. For the particular equivalences from
\autoref{thm:fedosov_equi} we have the following result:
\begin{lemma}
    \label{thm:fedosov_qham_equi}%
    Let $\Omega \in \fparam \Closed^2(M)\Inv \formal$ and $C \in
    \fparam \Formen^1(M)\Inv \formal$. Then for any quantum
    Hamiltonian $\qham{J}$ of the Fedosov star product
    $\FedStar_\Omega$ and the $\lie{g}$-invariant equivalence $T_C$
    obtained from $C$ via \autoref{thm:fedosov_equi} we have
    \begin{equation}
        \label{eq:fedosov_qham_equi}
        \qham{J}(\xi) + \ins_\xi C - T_C\qham{J}(\xi) = 0.
    \end{equation}
\end{lemma}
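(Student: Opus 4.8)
The plan is to reduce the claim to one identity inside the formal Weyl algebra and then extract \eqref{eq:fedosov_qham_equi} by projecting with $\sigma$. Since $T_C = \sigma\circ\mathcal{A}_h\circ\tau$ and $\sigma\circ\tau = \id$, the function $T_C\qham{J}(\xi)$ equals $\qham{J}(\xi) + \sigma\big((\mathcal{A}_h - \id)\tau(\qham{J}(\xi))\big)$, so \eqref{eq:fedosov_qham_equi} is equivalent to showing $\sigma\big((\mathcal{A}_h - \id)\tau(\qham{J}(\xi))\big) = \ins_\xi C$. Before computing I would record two structural facts that make this tractable. First, functions are $\FedProd$-central, so $\qadF$ annihilates the symmetric-degree-zero part $\qham{J}(\xi)$ of the Fedosov--Taylor series. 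Second, since $h$ has antisymmetric degree $0$ with $\insa(\xi)h = 0$, and $\Lie_\xi h = 0$ by \autoref{thm:fedosov_inv_equi}, the antisymmetric insertion $\insa(\xi)$ is a graded $\FedProd$-derivation that commutes with $\qadF(h)$, hence with $\mathcal{A}_h$ and with every power series in $\qadF(h)$.

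Next I would insert the explicit Fedosov--Taylor series from \autoref{thm:fedosov_qham_taylor}. Writing $K_\xi = \theta_\xi\tensor 1 + \tfrac12 D\theta_\xi\tensor 1$, centrality of $\qham{J}(\xi)$ gives $\qadF(h)\tau(\qham{J}(\xi)) = -\qadF\big(K_\xi + \insa(\xi)r\big)h$. The deformed Cartan formula \autoref{thm:fedosov_lie} applied to $h$, together with $\Lie_\xi h = 0$ and $\insa(\xi)h = 0$, yields $\insa(\xi)\FedDer h = \qadF\big(K_\xi - \insa(\xi)r\big)h$. I would then split $K_\xi + \insa(\xi)r = \big(K_\xi - \insa(\xi)r\big) + 2\insa(\xi)r$ and apply the operator $\tfrac{\exp\{\qadF(h)\}-\id}{\qadF(h)}$, which is exactly the factor multiplying $\qadF(h)\tau(\qham{J}(\xi))$ in $\mathcal{A}_h - \id$. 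Using the formula \eqref{eq:fedosov_equi_prop} for $\FedDer h$ and commuting $\insa(\xi)$ through all functions of $\qadF(h)$, the generating-function factor meets its inverse $\tfrac{\qadF(h)}{\exp\{\qadF(h)\}-\id}$ coming from \eqref{eq:fedosov_equi_prop} and telescopes to the identity, while $\tfrac{\exp\{\qadF(h)\}-\id}{\qadF(h)}$ acts trivially on the central function $\ins_\xi C$. This produces exactly $\ins_\xi C - \insa(\xi)(r'-r)$ plus a residual term proportional to $\tfrac{\exp\{\qadF(h)\}-\id}{\qadF(h)}\qadF(\insa(\xi)r)h$. Applying $\sigma$, the piece $\insa(\xi)(r'-r)$ lies in symmetric degree $\geq 2$ and drops out.

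The main obstacle is precisely this residual term, which is forced on us because the $\insa(\xi)r$ contribution enters the Fedosov--Taylor series (\autoref{thm:fedosov_qham_taylor}) and the deformed Cartan formula (\autoref{thm:fedosov_lie}) with \emph{opposite} signs. I expect to dispose of it by proving that its $\sigma$-projection vanishes: rewriting $\qadF(\insa(\xi)r)h = \insa(\xi)\qadF(r)h$ and using $\qadF(r)h = (\nabla - \delta)h - \FedDer h$ from \eqref{eq:fedosov_der_def}, the task becomes controlling the $\FedSym^0\tensor\FedAsym^1$ component of $\tfrac{\exp\{\qadF(h)\}-\id}{\qadF(h)}\qadF(r)h$, which I would track by feeding back the defining recursion \eqref{eq:fedosov_equi_constr} for $h$ together with $\sigma(h) = 0$. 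Alternatively -- and this is probably cleaner, mirroring the fixed-point argument used in the proof of \autoref{thm:fedosov_equi} -- I would set $E(\xi) = T_C\qham{J}(\xi) - \qham{J}(\xi) - \ins_\xi C$, lift the identities above to a Weyl-algebra equation for the corresponding error, and show by a contraction/degree-counting argument that it satisfies a homogeneous fixed-point equation whose unique solution is $0$, forcing $E(\xi) = 0$. In either route the delicate point is the bookkeeping of the residual $\insa(\xi)r$-term; everything else is the telescoping made possible by the commutation $[\insa(\xi),\qadF(h)] = 0$ and the $\FedProd$-centrality of functions.
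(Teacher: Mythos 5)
Your overall route is exactly the paper's: reduce \eqref{eq:fedosov_qham_equi} to $\sigma\bigl((\mathcal{A}_h - \id)\,\tau(\qham{J}(\xi))\bigr) = \ins_\xi C$, use $\FedProd$-centrality of functions, $\insa(\xi)h = 0$ and $[\insa(\xi), \qadF(h)] = 0$, the deformed Cartan formula, the expression \eqref{eq:fedosov_equi_prop} for $\FedDer h$, and a final projection by $\sigma$. Your disposal of $\insa(\xi)(r'-r)$ by pure degree counting (its symmetric degree is $\geq 1$, not $\geq 2$ as you write, but that is all one needs) is even slightly more direct than the paper's, which instead rewrites this term through the Fedosov--Taylor series of quantum Hamiltonians of \emph{both} star products before applying $\sigma$. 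Nevertheless the proposal does not close, and the gap is exactly the residual term you flag: neither of your two strategies can succeed, because $\sigma\bigl(\frac{\exp\{\qadF(h)\}-\id}{\qadF(h)}\,\qadF(\insa(\xi)r)\,h\bigr)$ is \emph{not} zero. Its lowest-order part is the full fibrewise contraction of the symmetric-degree-one parts of $\insa(\xi)r$ and of $h$, i.e.\ a nonzero multiple of $\omega^{ij}(\ins_\xi\Omega)_i C_j$; already on $\mathbb{R}^2$ with $X_\xi = \partial_q$, $\Omega = \fparam f'(p)\,\drham p\wedge\drham q$ and $C = \fparam g(p)\,\drham q$ this is proportional to $\fparam^2 f'g \neq 0$. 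So if the two displayed formulas you start from were both correct as printed, the lemma itself would fail.

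The resolution is that \eqref{eq:fedosov_qham_taylor} and \eqref{eq:fedosov_lie} are mutually inconsistent as printed, and the paper's proof silently works with the consistent signs: the Fedosov--Taylor series of a quantum Hamiltonian must read $\tau(\qham{J}(\xi)) = \qham{J}(\xi) + \theta_\xi\tensor 1 + \frac{1}{2}D\theta_\xi\tensor 1 - \insa(\xi)r$, with the \emph{same} relative sign on $\insa(\xi)r$ as in the Cartan formula; this is what the first line of the paper's proof actually uses, and it is the version proved by M\"uller-Bahns--Neumaier. The alignment is forced: applying \eqref{eq:fedosov_lie} to $\tau(f)$ and using $\insa(\xi)\tau(f) = 0 = \FedDer\tau(f)$ gives $\Lie_\xi\tau(f) = -\qadF\bigl(\theta_\xi\tensor 1 + \frac{1}{2}D\theta_\xi\tensor 1 - \insa(\xi)r\bigr)\tau(f)$, while invariance and the homomorphism property of $\tau$ give $\Lie_\xi\tau(f) = \tau(\Lie_\xi f) = -\qadF\bigl(\tau(\qham{J}(\xi))\bigr)\tau(f)$; hence $\tau(\qham{J}(\xi))$ minus the Cartan argument must commute with every $\tau(f)$. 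With the printed $+$ sign this would force $\qadF(\insa(\xi)r)\tau(f) = 0$ for all $f$, which fails already under $\sigma$ (the leading term is $-\frac{1}{2}\omega^{ij}(\ins_\xi\Omega)_i\,\partial_j f$); with the $-$ sign it is the true statement that the function $\qham{J}(\xi)$ is central. Once you make this correction, your own computation terminates with no residue: $\qadF(h)\tau(\qham{J}(\xi)) = -\qadF\bigl(\theta_\xi\tensor 1 + \frac{1}{2}D\theta_\xi\tensor 1 - \insa(\xi)r\bigr)h = -\insa(\xi)\FedDer h$ by \eqref{eq:fedosov_lie} together with $\Lie_\xi h = 0$ and $\insa(\xi)h = 0$; inserting \eqref{eq:fedosov_equi_prop}, commuting $\insa(\xi)$ through the power series, applying $\frac{\exp\{\qadF(h)\}-\id}{\qadF(h)}$ and then $\sigma$ yields $T_C\qham{J}(\xi) - \qham{J}(\xi) = \ins_\xi C$, which is \eqref{eq:fedosov_qham_equi}. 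In short, the missing idea is not a cleverer way to kill the residual term; it is the recognition that the residual is an artifact of a sign typo, the Taylor series of a quantum Hamiltonian being forced to match the argument of the deformed Cartan formula.
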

\begin{proof}
    This proof is essentially a straightforward calculation using
    \eqref{eq:fedosov_lie}, \eqref{eq:fedosov_qham_taylor},
    \eqref{eq:fedosov_equi_prop} and \eqref{eq:fedosov_inv_equi} as
    well as the fact that $\dega h = 0$ and thus $\insa(\xi) h = 0$
    for the unique solution $h$ of \eqref{eq:fedosov_equi_constr}. We have
    \begin{align*}
        \qadF(h) \tau\left(\qham{J}(\xi)\right)
        &=
        -\qadF\left(
            \qham{J}(\xi)
            + \theta_\xi \tensor 1
            + \frac{1}{2} D\theta_\xi \tensor 1
            - \insa(\xi) r
        \right) h \\
        &=
        \left(
            \Lie_\xi - \FedDer\insa(\xi) + \insa(\xi)\FedDer
        \right) h \\
        &=
        \insa(\xi)\FedDer h \\
        &=
        \insa(\xi) \left(
            -1 \tensor C
            +
            \frac{\qadF(h)}{\exp\left\{\qadF(h)\right\} - \id} (r'-r)
        \right) \\
        &=
        - 1 \tensor \ins_\xi C
        +
        \frac{\qadF(h)}{\exp\left\{\qadF(h)\right\} - \id}
        \left(
            \qham{J'}(\xi)
            -
            \tau'\left(\qham{J'}(\xi)\right)
            -
            \qham{J}(\xi)
            +
            \tau\left(\qham{J}(\xi)\right)
        \right),
    \end{align*}
    where we denoted by $\tau'$ the Fedosov-Taylor series
    corresponding to, and by $\qham{J'}$ any quantum Hamiltonian of
    the Fedosov star product constructed from $\Omega - \drham C$ (one
    might, for example, choose $\qham{J'} = T_C\qham{J}$). Next,
    applying $\left(\exp\left\{ \qadF\left( h \right) \right\} - \id
    \right) / \qadF\left( h \right)$ to the above equation yields
    \begin{equation}
        \label{eq:fedosov_qham_equi_2}
        \left(
            \exp\left\{\qadF\left( h \right) \right\} - \id
        \right)
        \tau\left( \qham{J}(\xi) \right)
        +
        1 \tensor \ins_\xi C
        =
        \left(
            \qham{J'}(\xi)
            - \tau\left(\qham{J'}(\xi)\right)
            - \qham{J}(\xi)
            + \tau\left(\qham{J}(\xi)\right)
        \right).
    \end{equation}
    Finally, we can apply $\sigma$, observe that the right hand side
    cancels out entirely and that the left hand side results in the
    desired terms after using \eqref{eq:fedosov_equi_def}.
\end{proof}

\section{Classification}
\label{sec:classification}

With the previous sections as preparations we can proceed towards our
central classification result. Namely, we will demonstrate that pairs
of Fedosov star products and quantum momentum mappings of those star
products, respectively, are equivariantly equivalent if and only if a
certain class in the second equivariant cohomology vanishes. Said
class will turn out to be $\left[ (\Omega - \qham{J}) - (\Omega' -
    \qham{J'})\right]\Equi$ where $\Omega$ and $\Omega'$ are the
series of closed two forms from which the Fedosov star products have
been constructed and $\qham{J}$ and $\qham{J'}$ respective quantum
momentum maps. To this end we will firstly employ two results from
\cite{mueller-bahns.neumaier:2004a} that will guarantee that our
classes are in fact well defined:
\begin{lemma}
    \label{thm:class_qham_cocycle}%
    A $\lie{g}$-invariant Fedosov star product for $(M,\omega)$
    obtained from $\Omega \in \fparam \Closed^2(M)\Inv \formal$ admits
    a quantum Hamiltonian if and only if there is an element $\qham{J}
    \in C^1\left( \lie{g}, \Cinfty(M) \right)\formal$ such that
    \begin{equation}
        \label{eq:class_qham_cocycle}
        \drham \qham{J}(\xi)
        =
        \ins_\xi \left( \omega + \Omega \right)
    \end{equation}
    for all $\xi \in \lie{g}$.  We then have $\Lie_\xi =
    -\qad_\star(\qham{J}(\xi))$.
\end{lemma}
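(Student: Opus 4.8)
The plan is to characterize the existence of a quantum Hamiltonian in terms of the Fedosov-Taylor series. By \autoref{thm:fedosov_qham_taylor}, the Fedosov-Taylor series of a quantum Hamiltonian $\qham{J}$ is forced to have the explicit form $\tau(\qham{J}(\xi)) = \qham{J}(\xi) + \theta_\xi \tensor 1 + \frac{1}{2} D\theta_\xi \tensor 1 + \insa(\xi) r$. Conversely, an element $\qham{J} \in C^1(\lie{g}, \Cinfty(M))\formal$ is a quantum Hamiltonian precisely when it satisfies the first equation in \eqref{eq:ConditionQMM}, namely $\Lie_\xi = -\qad_\star(\qham{J}(\xi))$. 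My strategy is therefore to translate this defining commutator condition into the cocycle condition \eqref{eq:class_qham_cocycle} using the deformed Cartan formula \eqref{eq:fedosov_lie}.

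First I would recall that the defining property $\Lie_\xi = -\qad_\star(\qham{J}(\xi))$ on $\Cinfty(M)\formal$ lifts, via the Fedosov-Taylor series, to the statement that $\Lie_\xi$ agrees with $-\qadF(\tau(\qham{J}(\xi)))$ on $\ker \FedDer$, since $\tau$ is an isomorphism onto $\ker \FedDer$ intertwining $\star$ and $\FedProd$. Comparing this with the deformed Cartan formula \eqref{eq:fedosov_lie}, which reads $\Lie_\xi = \FedDer\insa(\xi) + \insa(\xi)\FedDer - \qadF(\theta_\xi \tensor 1 + \frac{1}{2}D\theta_\xi \tensor 1 - \insa(\xi)r)$, one sees that on $\ker\FedDer$ the terms $\FedDer\insa(\xi)$ and $\insa(\xi)\FedDer$ act as $\FedDer\insa(\xi)$ only, and the whole condition collapses to a relation asserting that $\FedDer$ applied to the candidate element $\qham{J}(\xi) + \theta_\xi \tensor 1 + \frac{1}{2}D\theta_\xi \tensor 1 + \insa(\xi)r$ vanishes, i.e. that this element lies in $\ker\FedDer$ and hence equals $\tau(\qham{J}(\xi))$ by \autoref{thm:fedosov_qham_taylor}. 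The content is thus to show that $\FedDer$-closedness of this element is equivalent to the de Rham equation \eqref{eq:class_qham_cocycle}.

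The decisive computation is to extract the antisymmetric-degree-one, symmetric-degree-zero part of the equation $\FedDer(\qham{J}(\xi) + \theta_\xi\tensor 1 + \frac{1}{2}D\theta_\xi\tensor 1 + \insa(\xi)r) = 0$. Writing out $\FedDer = -\delta + \nabla - \qadF(r)$ and using $\delta(\qham{J}(\xi)) = 0$ (it has symmetric degree zero), the leading contribution in symmetric degree zero, antisymmetric degree one picks up $\nabla\qham{J}(\xi) = 1 \tensor \drham\qham{J}(\xi)$ together with the contributions of $\theta_\xi = \ins_\xi\omega$ and of $\insa(\xi)r$. The defining recursion \eqref{eq:Fedosovr} for $r$ carries the dependence on $\Omega$, and it is exactly the term $1\tensor\Omega$ there that produces the $\ins_\xi\Omega$ on the right-hand side, while the symplectic curvature and connection terms assemble into $\ins_\xi\omega$. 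Collecting these yields $\drham\qham{J}(\xi) = \ins_\xi(\omega + \Omega)$.

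\textbf{The main obstacle} will be the bookkeeping in this last step: one must carefully track how the lowest-order parts of $r$ (which begin in total degree $2$) feed $\ins_\xi\Omega$ into the symmetric-degree-zero component, and confirm that the connection-dependent pieces $\frac{1}{2}D\theta_\xi\tensor 1$ and the higher symmetric-degree parts of $r$ do not contribute to this particular homogeneous component, so that no extraneous terms survive. Once the equivalence between $\FedDer$-closedness and \eqref{eq:class_qham_cocycle} is established in both directions, the final claim $\Lie_\xi = -\qad_\star(\qham{J}(\xi))$ is immediate, since by construction $\qham{J}(\xi)$ then has Fedosov-Taylor series \eqref{eq:fedosov_qham_taylor} and \autoref{thm:fedosov_qham_taylor} characterizes exactly the Taylor series of quantum Hamiltonians; invariance of $\star$ guarantees $\Lie_\xi$ is a derivation so the commutator identity holds on all of $\Cinfty(M)\formal$.
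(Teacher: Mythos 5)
You should first be aware that the paper contains no proof of this lemma at all: it is imported as a known result from \cite{mueller-bahns.neumaier:2004a} (``we will firstly employ two results from...''), so the benchmark is the argument given there, and your three ingredients --- the deformed Cartan formula \eqref{eq:fedosov_lie}, the Taylor-series formula \eqref{eq:fedosov_qham_taylor} and the recursion \eqref{eq:Fedosovr} --- are indeed the right ones. The skeleton is correct, but the step carrying all the weight is missing, and your plan for it would fail. Everything hinges on the identity
\begin{equation*}
    \FedDer\left(
        \theta_\xi\tensor 1
        + \tfrac{1}{2}\, D\theta_\xi\tensor 1
        - \insa(\xi)\, r
    \right)
    =
    - 1\tensor\ins_\xi\left(\omega+\Omega\right),
\end{equation*}
valid for invariant $\nabla$ and $\Omega$ irrespective of any quantum Hamiltonian; it is what converts $\FedDer$-closedness of your candidate element into \eqref{eq:class_qham_cocycle} and back. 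You propose to obtain it by extracting the component of symmetric degree zero and antisymmetric degree one, and you explicitly assume that the higher symmetric-degree parts of $r$ do not contribute to that component. That assumption is false: this component contains \emph{all} powers of $\fparam$, and since the $s$-th order contraction in $\qadF(r)$ lowers the symmetric degree by $2s$ while carrying $\fparam^{s-1}$, already the symmetric-degree-four part of $r$ (nonzero in general, e.g. from $\delta^{-1}\nabla\delta^{-1}R$) feeds into this component of $\qadF(r)\bigl(\tfrac{1}{2}D\theta_\xi\tensor 1\bigr)$ through the $s=3$ term. Also, $\ins_\xi\omega$ does not arise from ``symplectic curvature and connection terms assembling''; it is simply $-\delta\left(\theta_\xi\tensor 1\right) = -1\tensor\ins_\xi\omega$, while the curvature and connection contributions must \emph{cancel}. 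The identity holds only because of structural cancellations, and its proof needs exactly the inputs your sketch never invokes: $\Lie_\xi r = 0$ together with the Cartan formula applied to $r$ itself, giving $\FedDer\insa(\xi)r = -\insa(\xi)\FedDer r + \qadF(w_\xi)r$ with $w_\xi$ the element appearing in \eqref{eq:fedosov_lie}; the consequence $\FedDer r = -R - 1\tensor\Omega - \frac{1}{\fparam}\, r\FedProd r$ of \eqref{eq:Fedosovr}, whose $1\tensor\Omega$ term is the sole source of $\ins_\xi\Omega$; the symmetry of $\nabla\theta_\xi$ (equivalently $\drham\theta_\xi=0$); and the identity relating $\nabla\left(D\theta_\xi\tensor 1\right)$ to $\insa(\xi)R$, which encodes invariance of the connection.

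Second, your concluding step is circular. \autoref{thm:fedosov_qham_taylor} is stated as a necessary condition only --- \emph{if} $\qham{J}$ is a quantum Hamiltonian, \emph{then} $\tau(\qham{J}(\xi))$ has the displayed form --- not as an ``exact characterization'', so it cannot be invoked to conclude, in the direction from \eqref{eq:class_qham_cocycle} to existence, that $\qham{J}$ is a quantum Hamiltonian once its candidate lift is shown to be $\FedDer$-closed. What closedness gives you is only $\tau(\qham{J}(\xi)) = \qham{J}(\xi) + w_\xi$ (it has antisymmetric degree zero and $\sigma$-image $\qham{J}(\xi)$); the property $\Lie_\xi = -\qad_\star\left(\qham{J}(\xi)\right)$ then still requires the short Cartan-formula computation
\begin{equation*}
    -\qad_\star\left(\qham{J}(\xi)\right) f
    =
    -\sigma\left(\qadF\left(\tau(\qham{J}(\xi))\right)\tau(f)\right)
    =
    -\sigma\left(\qadF\left(w_\xi\right)\tau(f)\right)
    =
    \sigma\left(\Lie_\xi \tau(f)\right)
    =
    \Lie_\xi f,
\end{equation*}
where the second equality uses that the function $\qham{J}(\xi)$ is $\FedProd$-central, and the third uses \eqref{eq:fedosov_lie} together with the fact that $\FedDer\insa(\xi)$ and $\insa(\xi)\FedDer$ both annihilate $\tau(f)$; this is the step your phrase ``the whole condition collapses'' points at but never performs in this direction. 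Finally, a warning: you combined \eqref{eq:fedosov_lie}, which carries $-\insa(X)r$, with \eqref{eq:fedosov_qham_taylor}, which carries $+\insa(\xi)r$. As printed in this paper the two are inconsistent --- in the paper's conventions the computation above forces the \emph{same} element $w_\xi$, with the minus sign, in both places --- so an argument taking both displayed formulas at face value, as yours does, terminates in the false assertion that $\qadF\left(\insa(\xi)r\right)$ annihilates the image of $\tau$, rather than in \eqref{eq:class_qham_cocycle}.
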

Note that since quantum Hamiltonians for the same star product differ
only by an element in $C^1\left( \lie{g}, \mathbb{C} \right)\formal$,
\eqref{eq:class_qham_cocycle} holds for every quantum Hamiltonian of
$\FedStar_\Omega$.  The second result from
\cite{mueller-bahns.neumaier:2004a} is the following consequence:
\begin{corollary}
    \label{corollary:class_qmom_cocycle}%
    Let $\FedStar_\Omega$ be a $\lie{g}$-invariant Fedosov star
    product constructed from $\Omega \in \fparam
    \Closed^2(M)\Inv\formal$. Then there exists a quantum momentum map
    if and only if there is an element $\qham{J} \in C^1(\lie{g},
    \Cinfty(M)\formal)$ such that
    \begin{equation}
        \label{eq:class_qmom_cocycle}
        \ins_\xi (\omega + \Omega) = \drham \qham{J}(\xi)
        \qquad
        \textrm{and}
        \qquad
        \left( \omega + \Omega \right)\left( X_\xi, X_\eta \right) =
        \qham{J}\left( \left[ \xi, \eta \right] \right).
    \end{equation}
\end{corollary}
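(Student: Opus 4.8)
The plan is to obtain the corollary directly from \autoref{thm:class_qham_cocycle} by translating the equivariance (momentum-map) condition of \autoref{def:prelim_qham} into the algebraic identity involving $(\omega + \Omega)(X_\xi, X_\eta)$. First I would note that the first equation in \eqref{eq:class_qmom_cocycle}, namely $\ins_\xi(\omega + \Omega) = \drham \qham{J}(\xi)$, is exactly the condition appearing in \autoref{thm:class_qham_cocycle}. Hence, by that lemma, the existence of a $\qham{J} \in C^1(\lie{g}, \Cinfty(M)\formal)$ satisfying this first equation is equivalent to $\qham{J}$ being a quantum Hamiltonian of $\FedStar_\Omega$, i.e.\ $\Lie_\xi = -\qad_\star(\qham{J}(\xi))$ for all $\xi$. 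Since a quantum momentum map is precisely a quantum Hamiltonian that in addition satisfies $\qham{J}([\xi,\eta]) = \frac{1}{\fparam}[\qham{J}(\xi), \qham{J}(\eta)]_\star$, it remains only to show that, for such a $\qham{J}$, this equivariance condition is equivalent to the second equation in \eqref{eq:class_qmom_cocycle}.

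The key computation is to evaluate the right-hand side of the equivariance condition. Writing $\frac{1}{\fparam}[\qham{J}(\xi), \qham{J}(\eta)]_\star = \qad_\star(\qham{J}(\xi))\qham{J}(\eta)$ and using the quantum Hamiltonian relation $\qad_\star(\qham{J}(\xi)) = -\Lie_\xi$ turns this into $-\Lie_\xi \qham{J}(\eta)$. As $\qham{J}(\eta)$ is a function, the Cartan identity reduces to $\Lie_\xi \qham{J}(\eta) = \ins_\xi \drham \qham{J}(\eta)$, and substituting the first equation $\drham \qham{J}(\eta) = \ins_\eta(\omega + \Omega)$ yields $\ins_\xi \ins_\eta(\omega + \Omega) = (\omega + \Omega)(X_\eta, X_\xi)$. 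Tracking the signs, one finds $\frac{1}{\fparam}[\qham{J}(\xi), \qham{J}(\eta)]_\star = (\omega + \Omega)(X_\xi, X_\eta)$, so the equivariance condition holds if and only if $\qham{J}([\xi,\eta]) = (\omega + \Omega)(X_\xi, X_\eta)$, which is precisely the second equation in \eqref{eq:class_qmom_cocycle}. Combining the two observations proves both implications at once: a quantum momentum map satisfies the first equation by being a quantum Hamiltonian and the second by the computation, while conversely any $\qham{J}$ satisfying both equations is a quantum Hamiltonian by \autoref{thm:class_qham_cocycle} and meets the equivariance condition by the same computation.

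I do not expect a genuine obstacle here, since the statement is a corollary and the calculation is elementary; the only points that require care are the sign conventions, in particular the ordering in the double insertion $\ins_\xi \ins_\eta(\omega + \Omega) = (\omega + \Omega)(X_\eta, X_\xi)$ and the sign in $\qad_\star(\qham{J}(\xi)) = -\Lie_\xi$. One should check that these combine so that $(\omega + \Omega)(X_\xi, X_\eta)$ appears with the correct sign, consistent with the classical zeroth order $\omega(X_\xi, X_\eta) = \left\{J_0(\xi), J_0(\eta)\right\}$.
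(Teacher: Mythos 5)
Your proposal is correct and follows essentially the same route as the paper: the paper establishes the corollary via the same chain of equalities $\qham{J}([\xi,\eta]) = \qad_\star(\qham{J}(\xi))\qham{J}(\eta) = -\ins_\xi \drham \qham{J}(\eta) = (\omega+\Omega)(X_\xi, X_\eta)$, reducing everything to \autoref{thm:class_qham_cocycle} exactly as you do. Your sign bookkeeping (insertion into the first argument, $\qad_\star(\qham{J}(\xi)) = -\Lie_\xi$) also matches the paper's conventions.
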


The following little calculation, which is valid for any quantum
momentum map $\qham{J}$,
\begin{equation*}
    \qham{J}([\xi,\eta])
    =
    \qad_\FedStar\left( \qham{J}(\xi) \right) \qham{J}(\eta)
    =
    - \ins_\xi \drham \qham{J}(\eta)
    =
    (\omega + \Omega)(X_\xi, X_\eta)
\end{equation*}
then shows that any quantum momentum map necessarily satisfies
\eqref{eq:class_qmom_cocycle}. And vice versa, any quantum Hamiltonian
satisfying \eqref{eq:class_qmom_cocycle} is in fact a quantum momentum
map. However, we will only use the above results in the following
capacity, namely to show that first, any quantum momentum map
$\qham{J}$ is an element of $\Omega\Equi^2(M)\formal$. For this we
have to demonstrate that the equivariance condition
\eqref{eq:prelim_equivariance_condition} holds, which reads
$\qham{J}([\xi,\eta]) = - \ins_\xi \drham \qham{J}(\eta)$ and is
obviously fulfilled as shown by the previous calculation. Second, let
us show that the equivariant cochain $\omega + \Omega - \qham{J} \in
\Omega\Equi^2(M)\formal$, with $\Omega \in \fparam
\Closed^2(M)\Inv\formal$ and $\qham{J}$ being a quantum momentum map
for the Fedosov star product $\FedStar_\Omega$, is
$\dequi$-closed. Indeed,
\begin{equation*}
    \dequi \left( \omega + \Omega - \qham{J} \right)(\xi)
    =
    \ins_\xi (\omega + \Omega) - \drham \qham{J}(\xi)
    = 0.
\end{equation*}
Using this observation we can restate the above condition on the
existence of a quantum momentum map as follows: there exists a quantum
momentum map iff there exists a map $\qham{J}$ such that $\omega +
\Omega - \qham{J} \in \Omega\Equi^2(M)\formal$ and $\dequi \left(
    \omega + \Omega - \qham{J} \right) = 0$, i.e. $\omega + \Omega$
extends to an equivariant two-cocycle. This is the direct analog of
the classical situation.

With those preliminary considerations done, we can prove the following
auxiliary lemma:
\begin{lemma}
    \label{thm:class_self}%
    Let $\qham{J}$ and $\qham{J'}$ be quantum momentum maps of a
    $\lie{g}$-invariant star product $\star$ deforming the same
    momentum map $J_0$. Then there exists a $\lie{g}$-invariant
    self-equivalence $A$ of $\star$ with $A\qham{J} = \qham{J'}$ if
    and only if $\qham{J'}-\qham{J}$ is a $\dequi$-coboundary.
\end{lemma}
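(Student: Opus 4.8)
The plan is to first determine the algebraic type of the difference $D := \qham{J'} - \qham{J}$, then reduce to a Fedosov star product so that the key formula \autoref{thm:fedosov_qham_equi} becomes available, and finally establish the two implications; the only genuinely nontrivial ingredient will be a normal form for invariant self-equivalences.

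First I would analyse $D$. As $\qham{J}$ and $\qham{J'}$ deform the same $J_0$, we have $D \in \fparam\, C^1(\lie{g}, \Cinfty(M))\formal$. Subtracting the two quantum Hamiltonian conditions $\Lie_\xi = -\qad_\star(\qham{J}(\xi)) = -\qad_\star(\qham{J'}(\xi))$ gives $\qad_\star(D(\xi)) = 0$, so each $D(\xi)$ is $\star$-central and, by connectedness of $M$, lies in $\mathbb{C}\formal$. Feeding this into the two momentum map conditions and using centrality of $D(\xi)$ to annihilate the cross terms in $[\qham{J}(\xi) + D(\xi), \qham{J}(\eta) + D(\eta)]_\star$ yields $D([\xi,\eta]) = 0$. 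Hence $D$ is a $\lie{g}$-invariant element of $(\Sym^1(\lie{g}^*) \tensor \Omega^0(M))\Inv \subseteq \Omega\Equi^2(M)\formal$ and is automatically $\dequi$-closed. Since $D$ has no two-form component, the assertion that $D$ is a $\dequi$-coboundary is equivalent to the existence of a closed, invariant one-form $\beta \in \fparam\Formen^1(M)\Inv\formal$ with $\ins_\xi \beta = D(\xi)$ for all $\xi$; that $\beta$ may be taken of order $\fparam$ follows by subtracting off its $\dequi$-closed zeroth order. This concrete one-form condition is what I will match against the existence of $A$.

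Next I would reduce to the Fedosov case. Via the invariant Fedosov construction there is a $\lie{g}$-invariant equivalence $S$ from $\star$ to some $\lie{g}$-invariant Fedosov star product $\FedStar_\Omega$; it carries quantum momentum maps to quantum momentum maps, fixes the constant-valued $D$ (differential operators of positive order kill constants), and conjugates invariant self-equivalences to invariant self-equivalences. Both the coboundary condition on $D$ and the existence of the required $A$ are therefore unaffected, and I may assume $\star = \FedStar_\Omega$. For the \emph{if} direction, given $\beta$ as above I apply \autoref{thm:fedosov_equi} with $C = \beta$: since $\drham \beta = 0$ the resulting equivalence $T_\beta$ goes from $\FedStar_\Omega$ to $\FedStar_{\Omega - \drham\beta} = \FedStar_\Omega$, hence is a self-equivalence, and it is invariant by \autoref{thm:fedosov_inv_equi}. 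The key lemma \autoref{thm:fedosov_qham_equi} then gives $T_\beta \qham{J}(\xi) = \qham{J}(\xi) + \ins_\xi \beta = \qham{J'}(\xi)$, so $A := T_\beta$ does the job.

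The hard part is the converse. Suppose an invariant self-equivalence $A$ with $A\qham{J} = \qham{J'}$ is given. Here I would invoke a normal form for invariant self-equivalences of a symplectic star product: modulo inner automorphisms $\exp(\qad_\star(h))$ with $h \in \fparam\Cinfty(M)\formal$ they are classified by a characteristic class in $\HdR^{\mathrm{inv},1}(M)\formal$, and every class is realized by a $T_\beta$ with $\beta$ closed and invariant. Writing $A = \exp(\qad_\star(h)) \circ T_\beta$ with $\beta$ representing the class of $A$, invariance of $A$ and of $T_\beta$ forces the inner factor to be invariant, and since $\Lie_\xi$ is a derivation of $\FedStar_\Omega$ this means $\qad_\star(\Lie_\xi h) = 0$, i.e. $\Lie_\xi h \in \mathbb{C}\formal$ for every $\xi$. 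A short computation then fixes the inner factor's action on any quantum momentum map $\widetilde{\qham{J}}$: one has $\qad_\star(h)\widetilde{\qham{J}}(\xi) = -\qad_\star(\widetilde{\qham{J}}(\xi))h = \Lie_\xi h$, which is central, so $\qad_\star(h)$ annihilates it and $\exp(\qad_\star(h))\widetilde{\qham{J}}(\xi) = \widetilde{\qham{J}}(\xi) + \Lie_\xi h$. Taking $\widetilde{\qham{J}} = T_\beta\qham{J}$ and using $\Lie_\xi h = \ins_\xi \drham h$ together with \autoref{thm:fedosov_qham_equi} gives $A\qham{J}(\xi) = \qham{J}(\xi) + \ins_\xi(\beta + \drham h)$. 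As $\beta + \drham h$ is closed and invariant, comparison with $A\qham{J} = \qham{J'}$ yields $\ins_\xi(\beta + \drham h) = D(\xi)$, so $D = \dequi(\beta + \drham h)$ is a coboundary. The single step that is not routine is this normal form; should one prefer to avoid citing it, it can be reproduced inside the Fedosov calculus by lifting $A$ to a $\FedProd$-automorphism commuting with $\FedDer$ and solving for its generator with the same fixed-point argument as in \autoref{thm:fedosov_equi}.
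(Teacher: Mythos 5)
Your proposal is correct in substance but takes a genuinely different route from the paper. The paper never reduces to the Fedosov case: it argues directly on the given invariant star product $\star$, citing the structure theorem that every self-equivalence is the exponential of the quasi-inner derivation attached to a formal series of closed one-forms (\cite[Thm.~6.3.18]{waldmann:2007a}), observing that invariance of $A$ forces invariance of the generating one-form $\theta$, and then both directions become short local computations with primitives $\theta\at{U} = \drham t_U$, yielding $A\qham{J}(\xi)\at{U} - \qham{J}(\xi)\at{U} = \Lie_\xi t_U = \dequi\theta(\xi)\at{U}$. Your reduction to $\FedStar_\Omega$ (invariant equivalences transport quantum momentum maps and fix the constant-valued difference $D$) is sound, and your ``if'' direction --- taking $A = T_\beta$ and letting \autoref{thm:fedosov_equi}, \autoref{thm:fedosov_inv_equi} and the key \autoref{thm:fedosov_qham_equi} do all the work --- is complete and arguably slicker than the paper's local construction of $\exp\left\{\qad_\star(\theta)\right\}$.

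The caveat is the converse. Your normal form $A = \exp\left\{\qad_\star(h)\right\} \circ T_\beta$ needs strictly more than the structure theorem plus invariance of the generator: it needs that the self-equivalence class of $T_\beta$ is $[\beta]$, equivalently that $A \circ T_\beta^{-1}$ is globally inner with a \emph{function} $h$ as generator. This is true, but it appears nowhere in the present paper, is not implied by anything you establish (in particular, knowing that $A \circ T_\beta^{-1}$ fixes the quantum momentum map does not make it inner --- think of a trivial action, where every self-equivalence fixes the constant-valued momentum map), and its proof, whether via Deligne-class arguments as in \cite{neumaier:2002a} or via your proposed lift to a $\FedProd$-automorphism commuting with $\FedDer$, carries roughly the same weight as the whole lemma. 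You can bypass it entirely with tools you already use: apply the structure theorem to $A$ itself, write $A = \exp\left\{-\qad_\star(\theta)\right\}$ with $\theta$ closed and invariant, and run your $\Lie_\xi h$ truncation argument locally with $t_U$ in place of $h$ (noting $\ins_\xi\theta$ is constant since $\theta$ is closed and invariant); this exhibits $\qham{J'} - \qham{J} = \dequi\theta$ directly, with no Fedosov reduction and no normal form --- which is precisely the paper's proof.
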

\begin{proof}
    First of all, from the defining property $\Lie_\xi =
    -\qad_\star\left(\qham{J}\right) =
    -\qad_\star\left(\qham{J'}\right)$ it is clear that $j = \qham{J'}
    - \qham{J}$ is central, hence a constant function on $M$ for all
    $\xi \in \lie{g}$ and consequently a $\dequi$-cocycle. Here we use
    that $M$ is connected.

    Now assume that there is a $\theta \in \fparam \Formen^1(M)\Inv
    \formal$ with $\dequi \theta = j$, which is equivalent to
    $\ins_\xi \theta = j(\xi)$ and $\drham \theta = 0$. Consequently
    the self-equivalence $A = \exp\left\{ \qad_\star(\theta)\right\}$
    is well defined: on sufficiently small open subsets $U \subseteq
    M$ we have $\theta\at{U} = \drham t_U$. Hence we can calculate locally
    \begin{equation*}
        \qad_\star\left( t_U \right) \qham{J}(\xi)\at{U}
        =
        \Lie_\xi t_U
        = j(\xi)\at{U}
        \quad
        \textrm{and hence}
        \quad
        \left(\qad_\star \left(t_U\right) \right)^k
        J(\xi)\at{U} = 0
    \end{equation*}
    for all $k \ge 2$.  This allows to compute
    \begin{equation*}
        A\qham{J}(\xi)\at{U}
        =
        \exp\left\{\qad_\star\left(t_U\right)\right\}
        \qham{J}(\xi)\at{U}
        =
        \qham{J}(\xi)\at{U}
        +
        j(\xi)\at{U}
        = \qham{J'}(\xi)\at{U}.
    \end{equation*}
    For the second part, assume that there is a $\lie{g}$-invariant
    self-equivalence $A$ of $\star$ with $A\qham{J} = \qham{J'}$. Then
    there is a closed, $\lie{g}$-invariant one-form $\theta \in
    \fparam \Formen(M)\Inv \formal$ with $A = \exp\left\{
        -\qad_\star\left(\theta\right) \right\}$, see
    e.g. \cite[Thm.~6.3.18]{waldmann:2007a} for the case without
    invariance. The invariance of $\theta$ is clear from the
    invariance of $A$. We can again calculate locally
    \begin{equation*}
        j(\xi)\at{U}
        =
        A\qham{J}(\xi)\at{U}
        -
        \qham{J}(\xi)\at{U}
        =
        \left(
            \sum\limits_{k=1}^{\infty}
            \frac{1}{k!} \left(\qad_\star\left(t_U\right)\right)^{k-1}
        \right)
        \qad_\star\left(\qham{J}(\xi)\at{U} \right) t_U,
    \end{equation*}
    where $\theta\at{U} = \drham t_U$.  Since the term in the brackets
    is a power series starting with $\id$ it is invertible and its
    inverse is again a power series in $\qad_\star\left(t_U\right)$
    starting with $\id$. Applying the inverse to both sides, using
    that $j(\xi)\at{U}$ is constant and $\drham\theta = 0$, we arrive
    at
    \begin{equation*}
        j(\xi)\at{U}
        = \Lie_\xi t_U
        = \dequi \theta(\xi)\at{U}.
    \end{equation*}
\end{proof}

Using this result we can now phrase the first classification result of
invariant star products with quantum momentum maps on a connected
symplectic manifold. To this end, we define the \emph{equivariant
  relative class}
\begin{equation}
    \label{eq:RelClassDef}
    \RelClass\left( \Omega', \qham{J'}; \Omega, \qham{J} \right)
    =
    \left[
        \left( \Omega' - \qham{J'} \right)
        -
        \left( \Omega - \qham{J} \right)
    \right]\Equi
\end{equation}
of two Fedosov star products built out of the data of the closed
two-forms and the quantum momentum maps. Note that we use for both
star products the \emph{same} $\lie{g}$-invariant symplectic
connection.
\begin{proposition}
    \label{thm:class_full}%
    Let $\Omega, \Omega' \in \fparam \Formen^2(M)\Inv \formal$ and
    $\FedStar_\Omega$, $\FedStar_{\Omega'}$ their corresponding
    Fedosov star products. Let furthermore $\qham{J}$ and $\qham{J'}$
    be quantum momentum maps of $\FedStar_\Omega$ and
    $\FedStar_{\Omega'}$ respectively, deforming the same momentum map
    $J_0$. Then there exists a $\lie{g}$-invariant equivalence $S$
    from $\FedStar_\Omega$ to $\FedStar_{\Omega'}$ such that
    $S\qham{J} = \qham{J'}$ if and only if
    \begin{equation}
        \label{eq:class_full}
        \RelClass\left( \Omega', \qham{J'}; \Omega, \qham{J} \right)
        = 0.
    \end{equation}
\end{proposition}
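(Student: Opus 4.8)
The plan is to exploit the dictionary between degree-two equivariant coboundaries and the Fedosov equivalences $T_C$ of \autoref{thm:fedosov_equi}. Recall from the discussion preceding the proposition that both $\omega + \Omega - \qham{J}$ and $\omega + \Omega' - \qham{J'}$ are $\dequi$-closed, so the relative class is represented by the closed cochain $(\Omega' - \qham{J'}) - (\Omega - \qham{J})$. A degree-two equivariant coboundary has the form $\dequi C = \drham C + \ins_\bullet C$ for an invariant one-form $C \in \fparam\Formen^1(M)\Inv\formal$, with de Rham part the two-form $\drham C$ and Cartan part the linear map $\xi \mapsto \ins_\xi C$. Hence the relative class vanishes \emph{if and only if} there is such a $C$ with $\Omega' - \Omega = \drham C$ and $\qham{J}(\xi) - \qham{J'}(\xi) = \ins_\xi C$ for all $\xi$. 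This reformulation is what makes both implications tractable.

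For the ``if'' direction I would start from such a $C$ and read off the equivalence directly. Since $\Omega - \Omega' = \drham(-C)$, \autoref{thm:fedosov_equi} produces an equivalence $T_{-C}\colon \FedStar_\Omega \to \FedStar_{\Omega'}$, which is $\lie{g}$-invariant by \autoref{thm:fedosov_inv_equi}. Applying the key \autoref{thm:fedosov_qham_equi} to $\qham{J}$ and $T_{-C}$ gives $T_{-C}\qham{J}(\xi) = \qham{J}(\xi) - \ins_\xi C = \qham{J'}(\xi)$, the last equality being exactly the Cartan part of the coboundary equation. Thus $S := T_{-C}$ is the desired invariant equivalence with $S\qham{J} = \qham{J'}$, and this direction is essentially immediate from the key lemma.

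For the ``only if'' direction, suppose an invariant $S\colon \FedStar_\Omega \to \FedStar_{\Omega'}$ with $S\qham{J} = \qham{J'}$ is given. First I would invoke the invariant classification of Fedosov star products \cite{bertelson.bieliavsky.gutt:1998a}: since $\FedStar_\Omega$ and $\FedStar_{\Omega'}$ are invariantly equivalent, their invariant de Rham classes coincide, yielding an invariant $C \in \fparam\Formen^1(M)\Inv\formal$ with $\Omega - \Omega' = \drham C$. With this $C$, \autoref{thm:fedosov_equi} and \autoref{thm:fedosov_inv_equi} produce the invariant equivalence $T_C\colon \FedStar_\Omega \to \FedStar_{\Omega'}$, and by \autoref{thm:fedosov_qham_equi} the pushed-forward momentum map satisfies $T_C\qham{J}(\xi) = \qham{J}(\xi) + \ins_\xi C$ and is again a quantum momentum map of $\FedStar_{\Omega'}$. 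Now $A := S \circ T_C^{-1}$ is a $\lie{g}$-invariant self-equivalence of $\FedStar_{\Omega'}$ carrying $T_C\qham{J}$ to $S\qham{J} = \qham{J'}$; since both deform the same $J_0$, \autoref{thm:class_self} shows $\qham{J'} - T_C\qham{J}$ is a $\dequi$-coboundary, say $\dequi\theta$ with closed $\theta \in \fparam\Formen^1(M)\Inv\formal$. Assembling the pieces, the representative becomes $(\Omega' - \Omega) + (\qham{J} - \qham{J'}) = -\drham C - \ins_\bullet(C+\theta) = \dequi\bigl(-(C+\theta)\bigr)$, using $\drham\theta = 0$, so the relative class vanishes.

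The main obstacle is the very first step of the ``only if'' direction, namely producing the \emph{invariant} primitive $C$ of $\Omega - \Omega'$: this is where one must import the invariant classification rather than argue from within the Fedosov machinery built here. Once the de Rham discrepancy has been trivialized invariantly, the rest is bookkeeping, the residual mismatch between $\qham{J'}$ and $T_C\qham{J}$ being central and absorbed by \autoref{thm:class_self}, while \autoref{thm:fedosov_qham_equi} is precisely what renders the action of $T_C$ on momentum maps explicit, the computation the introduction flags as the new and delicate point.
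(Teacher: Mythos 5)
Your proof is correct and follows essentially the same route as the paper's: both directions rest on translating the vanishing of the class into an invariant primitive $C$ (via the invariant classification of \cite{bertelson.bieliavsky.gutt:1998a} in the ``only if'' direction), the invariant equivalence $T_C$ from \autoref{thm:fedosov_equi} and \autoref{thm:fedosov_inv_equi}, the key \autoref{thm:fedosov_qham_equi}, and \autoref{thm:class_self}. The only deviation is a small streamlining in the ``if'' direction: by retaining the Cartan part of the coboundary (so that $\qham{J} - \qham{J'} = \ins_\bullet C$ as well as $\Omega' - \Omega = \drham C$) you obtain $S = T_{-C}$ with $S\qham{J} = \qham{J'}$ outright, whereas the paper uses only the exterior-degree-two part, computes $\left[T_C\qham{J} - \qham{J'}\right]\Equi = 0$, and then invokes \autoref{thm:class_self} to correct by a self-equivalence $A$, setting $S = A \circ T_C$.
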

\begin{proof}
    First, as a preliminary step, we need to show that $\RelClass$ is
    well-defined at all, i.e. that $\left( \Omega' - \qham{J'} \right)
    - \left( \Omega - \qham{J} \right)$ is $\dequi$-closed which,
    however, is nothing more than a simple application of
    \autoref{thm:class_qham_cocycle}.  Next, assume we are given such
    an equivalence $S$. This necessarily implies that $\Omega -
    \Omega' = \drham C$ for some $C \in \fparam \Formen^1(M)\Inv
    \formal$ and hence we can obtain another equivalence $T_C$ from
    \autoref{thm:fedosov_equi}. Additionally, from
    \autoref{thm:class_self} we can deduce that $\left[ T_C\qham{J} -
        S\qham{J} \right]\Equi = 0$. Therefore we are able to
    calculate with the help of \autoref{thm:fedosov_qham_equi} that
    \begin{align*}
        \RelClass\left( \Omega', \qham{J'}; \Omega, \qham{J}\right)
        &=
        \left[
            \left(\Omega' - \qham{J'} \right)
            -
            \left(\Omega - \qham{J} \right)
        \right]\Equi \\
        &=
        \left[
            \left(\Omega' - S\qham{J}\right)
            -
            \left(\Omega - \qham{J} \right)
            -
            \left(\qham{J} + \ins_\xi C - T_C\qham{J} \right)
        \right]\Equi \\
        &=
        \left[
            \left(T_C\qham{J} - S\qham{J} \right)
            -
            \left(\drham C + \ins_\xi C \right)
        \right]\Equi \\
        &= 0.
    \end{align*}
    On the other hand, assume that $\RelClass\left( \Omega',
        \qham{J'}; \Omega, \qham{J} \right) = 0$. Its representatives
    exterior degree-two part is just $\Omega' - \Omega$ and thus we
    know that there exists a $C \in \fparam \Formen^1(M)\Inv \formal$
    such that $\Omega - \Omega' = \drham C$. This again allows us to
    obtain a $\lie{g}$-invariant equivalence $T_C$ from
    $\FedStar_\Omega$ to $\FedStar_{\Omega'}$ with the help of
    \autoref{thm:fedosov_equi}. As before, we use
    \autoref{thm:fedosov_qham_equi} to calculate
    \begin{equation*}
        0 =
        \RelClass\left( \Omega', \qham{J'}; \Omega, \qham{J} \right)
        =
        \left[
            \left( \Omega' - \qham{J'} \right)
            -
            \left( \Omega - \qham{J} \right)
            -
            \left(\qham{J} + \ins_\xi C - T_C\qham{J} \right)
        \right]\Equi
        =
        \left[T_C\qham{J} - \qham{J'}\right]\Equi.
    \end{equation*}
    Thus we obtain a $\lie{g}$-invariant self-equivalence $A$ of
    $\FedStar_{\Omega'}$ from \autoref{thm:class_self} with
    $AT_C\qham{J} = \qham{J'}$. Hence arrive at the desired equivalence
    $S = A\circ T_C$.
\end{proof}

\section{Characteristic Class}
\label{sec:char}

From the classification of star products and $\lie{g}$-invariant star
products due to \cite{nest.tsygan:1995a, nest.tsygan:1995b,
  bertelson.cahen.gutt:1997a, deligne:1995a, weinstein.xu:1998a} and
\cite{bertelson.bieliavsky.gutt:1998a} we already know that two
Fedosov star products (invariant Fedosov star products)
$\FedStar_\Omega$, $\FedStar_{\Omega'}$ are equivalent if and only if
the relative class $c(\star_\Omega, \star_{\Omega'}) = [\Omega -
\Omega']$ ($c\Inv(\star_\Omega, \star_{\Omega'}) = [\Omega -
\Omega']\Inv$) in the de Rham (invariant de Rham) cohomology
vanishes. \autoref{thm:class_full} from the previous section is then
the specialization of those to equivariant star products. However,
there are slightly stronger results for the two previously known
cases, namely there are bijections $c\colon \EqStar(M,\omega)
\longrightarrow \HdR^2(M)\formal$ and $c\Inv\colon
\EqStar\Inv(M,\omega) \longrightarrow \HdR\InvS(M)\formal$,
respectively, between equivalence classes of star products (invariant
star products) to the second de Rham (invariant de Rham) cohomology
which is defined on Fedosov star products by
\begin{equation*}
    c(\FedStar_\Omega)
    =
    \frac{1}{\fparam} [\omega + \Omega] \in
    \frac{[\omega]}{\fparam} + \HdR(M)\formal
    \qquad
    \textrm{and}
    \qquad
    c\Inv(\FedStar_\Omega)
    =
    \frac{1}{\fparam} [\omega + \Omega]\Inv
    \in \frac{[\omega]\Inv}{\fparam} + \HdR\Inv(M)\formal,
\end{equation*}
respectively, and extended to all star products by the fact that every
star product (invariant star product) is equivalent (invariantly
equivalent) to a Fedosov star product (invariant Fedosov star
product). The aforementioned relative class is then precisely the
difference of the images of those maps (up to a normalization factor),
i.e.
\begin{equation*}
    \frac{1}{\fparam} c(\star_\Omega, \star_{\Omega'})
    =
    c(\star_\Omega) - c(\star_{\Omega'})
    \qquad
    \textrm{and}
    \qquad
    \frac{1}{\fparam} c\Inv(\star_\Omega, \star_{\Omega'})
    =
    c\Inv(\star_\Omega) - c\Inv(\star_{\Omega'}).
\end{equation*}
In the following, we will similarly define a bijection
$\AbsClass\colon \EqStar\Equi(M,\omega) \longrightarrow
\frac{1}{\nu}[\omega - J_0]\Equi + \EquiCohom(M)\formal$ from the
equivalence classes of equivariant star products to the equivariant
cohomology. In view of the classification result \eqref{eq:class_full}
it is tempting to define the class simply by taking the equivariant
class of $\Omega$ and $\qham{J}$. However, it is not completely
obvious that this is only depending of $\star$ and $\qham{J}$ as we
have to control the behaviour of $\qham{J}$ under invariant
self-equivalences. Nevertheless, with the previous results this turns
out to be correct. Hence we can state the following definition:
\begin{definition}[Equivariant characteristic class]
    \label{def:char_class}%
    Let $\FedStar_\Omega$ be the Fedosov star product constructed from
    $\Omega \in \fparam \Closed^2(M)\formal$ and $\qham{J}$ a quantum
    momentum map of $\FedStar_\Omega$. Then the equivariant
    characteristic class of $(\star, \qham{J})$ is defined by
    \begin{equation}
        \label{eq:TheClass}
        \AbsClass\left( \FedStar_\Omega, \qham{J} \right)
        =
        \frac{1}{\fparam}
        \left[ (\omega + \Omega) - \qham{J} \right]\Equi \in
        \frac{[\omega - J_0]\Equi}{\fparam}
        +
        \EquiCohom^2(M)\formal.
    \end{equation}
\end{definition}
Here we need to verify that $\AbsClass\left( \FedStar_\Omega, \qham{J}
\right)$ is well-defined by showing that $(\omega + \Omega) -
\qham{J}$ is $\dequi$-closed, which is equivalent to
\autoref{thm:class_qham_cocycle}. Using this equivariant class we can
reformulate \autoref{thm:class_full} slightly:
\begin{theorem}
    \label{thm:class_full_abs}%
    Let $\Omega, \Omega' \in \fparam\Closed^2(M)\formal$ and
    $\FedStar_\Omega$, $\FedStar_{\Omega'}$ their corresponding
    Fedosov star products. Let furthermore $\qham{J}$ and $\qham{J'}$
    be quantum momentum maps of $\FedStar_\Omega$ and
    $\FedStar_{\Omega'}$ respectively, deforming the same momentum
    map. Then there exists a $\lie{g}$-invariant equivalence $S$ from
    $\FedStar_\Omega$ to $\FedStar_{\Omega'}$ such that $S\qham{J} =
    \qham{J}$ if and only if
    \begin{equation}
        \label{eq:ClassesEqual}
        \AbsClass\left( \FedStar_{\Omega'}, \qham{J'} \right)
        =
        \AbsClass\left( \FedStar_\Omega, \qham{J} \right).
    \end{equation}
\end{theorem}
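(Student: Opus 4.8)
The plan is to deduce this theorem directly from \autoref{thm:class_full} by expressing the difference of the two absolute characteristic classes of \autoref{def:char_class} in terms of the equivariant relative class $\RelClass$ of \eqref{eq:RelClassDef}. Both classes live in the affine space $\frac{[\omega - J_0]\Equi}{\fparam} + \EquiCohom^2(M)\formal$ with a common reference point, so their difference is a genuine element of $\EquiCohom^2(M)\formal$, and I expect it to coincide with $\frac{1}{\fparam}\RelClass$ up to obvious bookkeeping.

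First I would note that the mere existence of the quantum momentum maps $\qham{J}$ and $\qham{J'}$ forces $\FedStar_\Omega$ and $\FedStar_{\Omega'}$ to be $\lie{g}$-invariant, hence $\Omega, \Omega' \in \fparam\Closed^2(M)\Inv\formal$ by \autoref{proposition:fedosov_inv_star}; thus all hypotheses of \autoref{thm:class_full} are in force. Next I would compute, using the $\fparam$-linearity of the equivariant cohomology class, that
\begin{equation*}
    \AbsClass\left( \FedStar_{\Omega'}, \qham{J'} \right)
    -
    \AbsClass\left( \FedStar_\Omega, \qham{J} \right)
    =
    \frac{1}{\fparam}
    \left[
        \left( \Omega' - \qham{J'} \right)
        -
        \left( \Omega - \qham{J} \right)
    \right]\Equi
    =
    \frac{1}{\fparam}
    \RelClass\left( \Omega', \qham{J'}; \Omega, \qham{J} \right).
\end{equation*}
Here the common summand $\omega$ cancels, and with it the reference term $\frac{1}{\fparam}[\omega - J_0]\Equi$, precisely because both products quantize the same $\omega$ and both quantum momentum maps deform the same $J_0$.

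It then remains to observe that the representative $(\Omega' - \qham{J'}) - (\Omega - \qham{J}) = (\Omega' - \Omega) - (\qham{J'} - \qham{J})$ is of order $\fparam$: the first summand because $\Omega, \Omega' \in \fparam\Closed^2(M)\formal$, the second because $\qham{J}$ and $\qham{J'}$ share the zeroth order $J_0$. Consequently $\RelClass$ lies in $\fparam\,\EquiCohom^2(M)\formal$, and since $\fparam$ is not a zero divisor, $\fparam\cdot\bigl(\frac{1}{\fparam}\RelClass\bigr) = \RelClass$ shows that the equality \eqref{eq:ClassesEqual} holds if and only if $\RelClass(\Omega', \qham{J'}; \Omega, \qham{J}) = 0$.

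Finally, invoking \autoref{thm:class_full} translates this vanishing into the existence of the desired $\lie{g}$-invariant equivalence $S$ from $\FedStar_\Omega$ to $\FedStar_{\Omega'}$ with $S\qham{J} = \qham{J'}$, which concludes the argument. I anticipate no real obstacle: the substantive work was already carried out in \autoref{thm:class_full}, and the only delicate point is the cancellation of the $\frac{1}{\fparam}$-order reference terms, guaranteed here by the shared $\omega$ and $J_0$.
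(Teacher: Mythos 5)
Your proposal is correct and follows essentially the same route as the paper, which states this theorem without a separate proof as a ``slight reformulation'' of \autoref{thm:class_full}: your computation that
\begin{equation*}
    \AbsClass\left( \FedStar_{\Omega'}, \qham{J'} \right)
    -
    \AbsClass\left( \FedStar_\Omega, \qham{J} \right)
    =
    \frac{1}{\fparam}
    \RelClass\left( \Omega', \qham{J'}; \Omega, \qham{J} \right)
\end{equation*}
is exactly the bookkeeping that justifies this reformulation. Your supporting observations --- that the existence of the quantum momentum maps forces $\Omega, \Omega' \in \fparam\Closed^2(M)\Inv\formal$ via \autoref{proposition:fedosov_inv_star} (so \autoref{thm:class_full} indeed applies), and that the shared $\omega$ and $J_0$ make the representative of order $\fparam$ so that division by $\fparam$ is harmless --- are accurate and close the only gaps one could worry about.
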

Finally, we wish to extend \autoref{def:char_class} from only Fedosov
star products to all star products on $M$ and their corresponding
quantum momentum maps. To do so, we first cite a result from
\cite{bertelson.bieliavsky.gutt:1998a} stating that for every
$\lie{g}$-invariant star product $\star$ there is a
$\lie{g}$-invariant equivalence $S$ to a $\lie{g}$-invariant Fedosov
star product $\FedStar_\Omega$. Given a quantum momentum map
$\qham{J}$ of $\star$ we can use $S$ to assign the equivariant class
$\AbsClass\left( \star, \qham{J} \right) \coloneqq \AbsClass\left(
    \FedStar_\Omega, S\qham{J} \right)$ to the pair $(\star,
\qham{J})$. This class obviously does not depend on the choice of
either $S$ or $\Omega$, since, given another $\lie{g}$-invariant
equivalence $T$ to another Fedosov star product $\FedStar_{\Omega'}$,
we immediately acquire a $\lie{g}$-invariant equivalence $T \circ
S^{-1}$ between $\FedStar_\Omega$ and $\FedStar_{\Omega'}$ with
$(T\circ S^{-1}) S\qham{J} = T\qham{J}$, showing (with the help of
\autoref{thm:class_full_abs}) that $\AbsClass\left( \FedStar_\Omega,
    S\qham{J} \right) = \AbsClass\left( \FedStar_{\Omega'}, T \qham{J}
\right)$. In conclusion, $\AbsClass$ defines a map
\begin{equation*}
    \AbsClass\colon
    \EqStar\Equi(M,\omega)
    \longrightarrow
    \frac{[\omega - J_0]\Equi}{\fparam} + \EquiCohom^2(M)\formal
\end{equation*}
from the set $\EqStar\Equi(M, \omega)$ of equivalence classes of star
products on $M$ with quantum momentum maps to the second equivariant
cohomology $\EquiCohom^2(M)\formal$. The map $\AbsClass$ is then
easily recognized to be invertible with inverse given as
\begin{equation*}
    \frac{1}{\fparam}
    \left[ \omega + \Omega - \qham{J} \right]\Equi
    \longmapsto
    \left[\FedStar_\Omega, \qham{J}\right]\Equi,
\end{equation*}
once we remember that $\Omega\Equi^2(M) = \Omega^2(M)\Inv \oplus
\Sym^1(\lie{g}^*)\Inv$, which completes the proof of our main theorem.

As a final remark, let us note that the three classification results
for star products, invariant star products and equivariant star
products are connected by the sequence of maps
\begin{equation*}
    \EquiCohom^2(M) \longrightarrow \HdR\InvS(M)
    \longrightarrow
    \HdR^2(M),
\end{equation*}
where the first map is the projection of $\EquiCohom^2(M)$ onto the
first summand and the second map is the natural inclusion of invariant
differential forms into the differential forms. This shows in
particular that equivariantly equivalent star products are invariantly
equivalent and likewise invariantly equivalent star products are
equivalent.

%
%


%
%


%
%

\end{document}
